\documentclass[reqno]{amsart}
\usepackage{amssymb,amsthm}
\usepackage[dvips]{graphicx}
\newcommand{\R}{{\mathbb R}}

\newcommand{\C}{{\mathbb C}}
\newcommand{\Z}{{\mathbb Z}}
\newcommand{\CS}{\mathcal S}
\newcommand{\cal}{\mathcal}
\newcommand{\WT}[1]{\widetilde{#1}}
\newtheorem{theorem}{Theorem}[section]
\newtheorem{corollary}[theorem]{Corollary}
\newtheorem{lemma}[theorem]{Lemma}

\newtheorem{proposition-definition}[theorem]{Proposition - Definition}
\newtheorem{definition}[theorem]{Definition}
\newtheorem{remark}[theorem]{Remark}

\newtheorem{hypothesis}[theorem]{Induction hypothesis}
\numberwithin{equation}{section}

\begin{document}

\title[Gradient-like vector fields on a complex analytic variety]{Gradient-like vector fields \\on a complex analytic variety}
\author{Cheol-Hyun Cho and Giovanni Marelli}

\begin{abstract}
Given a complex analytic function $f$ on a Whitney stratified complex analytic variety of complex dimension $n$, whose real part $Re(f)$ is Morse, we prove the existence of a 
stratified gradient-like vector field for $Re(f)$ such that the unstable set of a critical point $p$ on a stratum $S$ of complex dimension $s$ has real dimension $m(p)+n-s$ as was conjectured by Goresky and MacPherson.
\end{abstract}

\thanks{First author was supported by the Basic research fund 2009-0074356 funded by the Korean government}
\address{Department of Mathematical Sciences, Seoul National University,
San 56-1, Shinrimdong, Gwanak-gu, Seoul, South Korea,  chocheol@snu.ac.kr \\
KIAS,  Hoegiro 87, Dongdaemun-gu, Seoul 130-722, South Korea, marelli@kias.re.kr}
\maketitle
\section{Introduction}
Morse theory has been a very powerful tool to study the topology and geometry of manifolds.
An analogue of such theory for singular stratified spaces has been developed by
Lazzeri \cite{La}, Pignoni \cite{Pi}, Goresky-MacPherson \cite{GMP} and by many other people. For a good review of
the development of stratified Morse theory, we refer readers to the article of Massey \cite{Ma1}. Goresky and MacPherson
proved the main theorem of stratified Morse theory that local Morse data is given as the product of
the tangential and normal Morse data in \cite{GMP} and  have given beautiful applications of stratified Morse theory to intersection cohomology and to the topology of complement of affine subspaces arrangements.

Many examples of singular stratified spaces are provided by complex analytic varieties, in which case the stratified Morse theory behaves much better. For example, Goresky and MacPherson showed that the normal Morse data of a point
in the stratum is determined by the complex link up to homotopy equivalence.
Also in \cite{GMP} they showed that a critical point of a Morse function of a (stratified) complex analytic variety
can be given an index, in the sense that the intersection homology of the Morse data is shown to be non-vanishing 
only in one index (Theorem I.1.6 \cite{GMP}). In the case of smooth manifolds, the Morse index of a critical point is given as the
dimension of the unstable manifold of the negative gradient flow.  But in the (stratified) case of complex analytic varieties, it has not been known whether the index 
given from the intersection homology can be interpreted as the dimension of the unstable (stratified) manifold or not.

In fact, Goresky and MacPherson conjectured the existence of a gradient-like vector field whose dimension of unstable (stratified) manifold is related to the above index (\cite{GMP} p 213). The goal of this paper is to provide the construction of this conjectural gradient-like vector field
for Morse functions obtained from complex analytic functions.

\begin{theorem}\label{thm:main}
Let $X$ denote a purely $n$-dimensional reduced complex analytic variety, on which we fix a Whitney stratification $\mathcal S$.
Let $f^c:X\rightarrow\C$ be a complex analytic function, and denote by $f := Re(f^c)$ its real part. Suppose that $f$ is a Morse function on $X$ and let $p$ be a critical point of $f$. Assume that
$\{p\}$ is a stratum of $\cal S$.
Then, there exists a stratified weakly controlled gradient-like vector field $V$ for $f$ with continuous flow and whose unstable and stable set $W^u(p)$ and $W^s(p)$ at $p$ satisfy for every $S \in \cal S$, 
\begin{eqnarray}\label{dimest1}
\dim_{\R} (W^u(p) \cap S) &\leq& \dim_{\C} S  \nonumber \\
\dim_{\R} (W^s(p) \cap S) &\leq& \dim_{\C} S 
\end{eqnarray}
\end{theorem}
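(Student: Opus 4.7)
My strategy is to build $V$ one stratum at a time in a small neighbourhood of $p$, using on each stratum the Riemannian gradient of $f|_S$ for a carefully chosen Hermitian metric, and then interpolating across tubular neighbourhoods to enforce weak control. Away from such a neighbourhood the dimension estimates are vacuous, so $V$ can be extended by any stratified pseudogradient via the standard Thom--Mather machinery. The induction is on $\dim S$ among strata $S$ with $p\in\overline S$: set $V\equiv 0$ on the point stratum $\{p\}$; given $V$ constructed on all lower-dimensional strata containing $p$ in their closure, choose a Hermitian metric $h_S$ on $S$ compatible with the control data $(T_{S'},\pi_{S'},\rho_{S'})$ of each such $S'$, and start from $V_S^{0}:=\nabla^{h_S}(f|_S)$. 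To glue $V_S^{0}$ with the previously defined vector fields and to ensure $d\pi_{S'}(V|_{S\cap T_{S'}})=V|_{S'}$ in the weakly controlled sense, I would interpolate using a cutoff in $\rho_{S'}$; continuity of the induced flow then follows as in Thom--Mather theory.

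\textbf{Dimension estimate.} The key point is that $f^c|_S$ is holomorphic, so by the Cauchy--Riemann equations $V_S^{0}$ equals the negative of the Hamiltonian vector field of $Im(f^c|_S)$ with respect to the K\"ahler form $\omega_S$ associated to $h_S$. Its flow is a symplectomorphism and preserves $Im(f^c|_S)$. Hence any $x\in W^u(p)\cap S$ whose backward orbit lies in the region where $V|_S=V_S^{0}$ satisfies $Im(f^c)(x)=Im(f^c)(p)$, and for tangent vectors $u,v$ to $W^u(p)\cap S$ at $x$, symplectic invariance gives $\omega_S(u,v)=\omega_S(d\phi_{-t}u,d\phi_{-t}v)$; the right-hand side tends to $0$ as $t\to\infty$ because the backward orbit collapses to $p$ and the linearised unstable directions are contracted under $d\phi_{-t}$. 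Consequently $W^u(p)\cap S$ is an isotropic subanalytic subset of the K\"ahler $2s$-manifold $S$, so $\dim_\R(W^u(p)\cap S)\leq s=\dim_\C S$. Applying the same construction to $-f^c$, whose real part is $-f$ and whose associated gradient-like field is $-V$, yields the estimate for $W^s(p)\cap S$.

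\textbf{Main obstacle.} The principal technical difficulty is to enforce weak control without destroying the Hamiltonian nature of $V_S^{0}$ on which the isotropy argument depends. A naive bump-function modification of $V_S^{0}$ inside $T_{S'}$ breaks the symplectic invariance of the flow and in principle allows orbits in $W^u(p)\cap S$ to leave the level $\{Im(f^c|_S)=Im(f^c)(p)\}$ and pick up extra real dimension. The resolution must coordinate the choices of $h_S$ and of the interpolating cutoffs so that either the modified region is not reached by backward orbits limiting to $p$, or the interpolation is itself Hamiltonian for an appropriately deformed imaginary part. Verifying that such compatible choices can always be made on a general Whitney stratified complex analytic variety, and assembling the local pieces into a single weakly controlled stratified vector field with continuous flow, is where the bulk of the work lies.
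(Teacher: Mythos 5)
Your plan is genuinely different from the paper's (which never uses a per-stratum Kähler gradient: it lifts the explicit base field $\sqrt{x^2+y^2}\,\partial/\partial x$ on the disc through $f^c$, and identifies $W^u(p)$ with L\^e's collapsing polyhedron over the negative real axis, the stratum-wise dimension bound being inherited from the inductive polar-curve construction of the vanishing polyhedron). But as written your argument has a genuine gap, and it sits exactly where you park it in the last paragraph. The isotropy argument only applies on the region where $V|_S$ is literally the Kähler gradient $V_S^0$, yet every backward orbit in $S$ limiting to $p$ eventually enters \emph{every} neighbourhood of $p$, and the weak-control interpolation must be performed precisely in the tubes $T_{S'}$ of the lower strata whose closures contain $p$ --- i.e.\ in every neighbourhood of $p$. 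So the first branch of your proposed resolution (``the modified region is not reached by backward orbits limiting to $p$'') is unavailable in principle, and the second branch (an interpolation that is itself Hamiltonian for a deformed $Im(f^c)$) is exactly the construction you do not supply. Since the whole difficulty of the conjecture is the interaction of the flow with the smaller strata near $p$, this is not a technical loose end but the main content of the theorem left unproved.

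Two further steps of the dimension estimate are also unjustified. First, the contraction claim $d\phi_{-t}u\to 0$: the standard proof that unstable manifolds of the gradient flow of $Re(f^c)$ are isotropic uses the hyperbolic linearisation at the critical point, but here $p\notin S$, there is no linearisation of the flow on $S$ at $p$, and neither $h_S$ nor $d\phi_{-t}$ is controlled as the orbit escapes to the boundary of the stratum (the metric compatible with the control data can degenerate or blow up there); preservation of $\omega_S(u,v)$ alone gives nothing without this limit. Second, you treat $W^u(p)\cap S$ as a set with tangent spaces (a submanifold, or at least a subanalytic set of well-defined dimension); for the flow of a merely weakly controlled, non-analytic vector field this is not automatic, and establishing that the unstable set is a set of this kind is part of what must be proved --- in the paper it is built in, because the unstable set is constructed outright as a polyhedron compatible with $\cal S$. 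Your reduction of the stable-set estimate to $-f^c$ is fine in spirit, but note it must be run for the same field $V$ (forward-time isotropy of the stable set), not for a separately constructed field for $-f$.
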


For a general critical point $p$ of the Morse function $f$, the point $p$ itself does not form a stratum but
belongs to some $S_0 \in \cal S$. In this case, by restricting to a normal slice at $p$, and by applying the
main theorem \ref{thm:main}, we obtain the following corollary, which justifies the definition of Morse index in
the complex analytic case.
\begin{corollary}\label{cor:main}
Let $X, \mathcal S, f^c,f$ be as above.
Then, there exists a stratified  weakly controlled gradient-like vector field $V$ for $f$ with continuous flow 
such that for any critical point $p$  of $f$, its unstable and stable set $W^u(p)$ and $W^s(p)$ at $p$ satisfy
\begin{eqnarray*}
\dim_{\R} W^u(p) &=&m_S(p)+n-s \\
\dim_{\R} W^s(p) &=& n+s-m_S(p)
\end{eqnarray*}
where $m_S(p)$ denotes the Morse index of $f_{|S}$ at $p$, $S$ is the stratum containing $p$, and $s=\dim_{\C}S$.
\end{corollary}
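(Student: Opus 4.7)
The plan is to reduce the corollary to Theorem \ref{thm:main} via the local topological product structure of a Whitney stratification along each stratum. Let $S_0 \in \mathcal S$ be the stratum containing $p$, with $s = \dim_{\C} S_0$. There is a neighborhood $U$ of $p$ in $X$ together with a stratum-preserving homeomorphism $U \cong B \times N$, where $B \subset S_0$ is a ball around $p$ and $N$ is a normal slice to $S_0$ at $p$. The induced Whitney stratification of $N$ has $\{p\}$ as its unique zero-dimensional stratum, and the restriction $f^c|_N$ is complex analytic with $N$ of complex dimension $n-s$; in particular $f|_N$ is Morse with $p$ an isolated nondegenerate critical point, so the hypotheses of Theorem \ref{thm:main} apply to $(N, f^c|_N, p)$.

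Next I would construct the local gradient-like vector field as a product. On $B$, the holomorphic function $f^c|_{S_0}$ has a nondegenerate complex Morse critical point at $p$, so $f|_B$ is smoothly Morse of index $m_S(p)$; its negative gradient $V_B$ in a K\"ahler metric has unstable and stable manifolds of real dimensions $m_S(p)$ and $2s - m_S(p)$. On $N$, Theorem \ref{thm:main} yields a stratified, weakly controlled, gradient-like vector field $V_N$ for $f|_N$ satisfying the bounds \eqref{dimest1}. Transporting the product $V_B \oplus V_N$ through the trivialization produces a stratified gradient-like vector field on $U$ whose unstable and stable sets at $p$ split as
\[
W^u(p) = W^u_B(p) \times W^u_N(p), \qquad W^s(p) = W^s_B(p) \times W^s_N(p).
\]
This is then extended to all of $X$ by a standard partition-of-unity patching compatible with $\mathcal S$, applied one critical point at a time. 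The equalities claimed in the corollary follow by summing the real dimensions of the two factors, using the exact values $\dim_{\R} W^u_N(p) = \dim_{\R} W^s_N(p) = n-s$ in the normal direction.

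The main obstacle is twofold. First, the product vector field on $U$ must be genuinely \emph{weakly controlled} in the sense used by Theorem \ref{thm:main}: this constrains the choice of K\"ahler metric on $B$ and the way the trivialization $U \cong B \times N$ is arranged relative to the control data on the strata of $X$ meeting $U$, and continuity of the flow across the patching region outside $U$ must also be verified. Second, Theorem \ref{thm:main} only asserts the upper bound $\dim_{\R} W^u_N(p) \leq n-s$, whereas the corollary requires equality; the matching lower bound must be extracted from the explicit construction in the proof of Theorem \ref{thm:main}, where on the top smooth stratum of $N$ the unstable and stable sets at $p$ are realized as totally real middle-dimensional pieces of a standard complex Morse model for the normal datum.
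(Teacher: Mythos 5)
Your proposal follows essentially the same route as the paper: apply the local theorem to the normal slice at each critical point, add the tangential (smooth-case) Morse component along the stratum, and patch globally by a partition of unity (the paper additionally lifts the unit field on $\R$ via the submersion $f$ away from critical points). The lower-bound issue you flag is exactly what the paper's strengthened local statement (Theorem \ref{thm:normal}) supplies — it asserts the equality $\dim_{\R}W^u(p)=\dim_{\R}W^s(p)=n$ for a point stratum, the unstable set being the cone over L\^e's $(n-1)$-dimensional vanishing polyhedron — so on the normal slice one gets exactly $n-s$ in the normal directions, as your dimension count requires.
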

\begin{remark}
The dimension of the unstable set $m_S(p)+n-s$ equals the Morse index $m(p)$ defined in 
the definition \ref{def:morind}(\cite{GMP}). 
\end{remark}

The scheme of the proof of the main theorem is rather simple. Namely, we exploit  the beautiful construction, recalled in section 5, of the vanishing polyhedron performed by L\^e in \cite{L2} and
modify it to construct a field on $X$ with the prescribed unstable set and projecting via $f^c$ onto horizontal lines in the half disc $D^- := D^2 \cap \{z|Re(z) < 0 \}$.
We consider then
a similar construction over $D^+$ and glue them along the imaginary axis. As the constructed field may be trivial
over $(f^c)^{-1}(0)$, we modify the obtained flow by 
combining with another contribution, built by using the submersion similar to $f:X \setminus \{p\} \to \R$, so that
the resulting vector field satisfies the desired properties. 

There are related recent works by Misha Grinberg and Ursula Ludwig on this conjecture and we explain them
briefly in section 3.
 
We hope this work to be a first step to apply modern techniques of Morse theory or in general Floer theory
to complex analytic spaces. As a next step we hope to construct in the near future, the Morse-Witten-Smale complex of intersection cohomology using the construction of this paper. 

Here is the organization of this paper. In the next section, we recall basic notions in the theory of stratified spaces.
In section 3, we recall previous results related to the conjecture. In section 4 and 5, we explain polar curve and
L\^e's construction. In the last section we prove the main theorem.

We would like to thank L\^e D\~ung Tr\'ang, and Misha Grinberg for helpful correspondences and specially
for Misha Grinberg, who pointed out some mistakes in the original version. After this work was posted on the arXiv, Grinberg has proved similar results but with a different method recently in \cite{Gr2}.

\section{Preliminaries}
Let M be a smooth manifold and $X\subset M$ a closed subset endowed with a Whitney stratification.
We refer readers to \cite{P} for the definition of Whitney stratification and more details concerning stratified spaces and
vector fields on them.
\subsection{Stratified Morse theory}
First, we recall the definition of Morse function on $X$.
\begin{definition}\label{def:mor}
A Morse function $f:X\rightarrow\R$ is the restriction of a smooth function $f:M\rightarrow\R$ such that\\ 
(a) $f_{|X}$ is proper and its critical values are distinct\\ 
(b) for each stratum $S_i$ the critical points of $f|_{S_i}:S_i\rightarrow\R$ are non-degenerate\\ 
(c) for every such critical point $p\in S_i$ and for every generalized tangent space $Q$ at $p$, with $Q\neq T_pS_i$, we have $df(p)(Q)\neq0$.
\end{definition}

\begin{definition}
A gradient-like vector field for a Morse function $f$ on a stratified space $X$ is a vector field $V$ such that its restriction $V|_S$ to each stratum $S$ is a gradient-like vector field for $f|_S$, that is:\\
(a) for each critical point $p$ of $f|_S$ there is a neighbourhood $U_S(p)$ of $p$ in $S$ and a chart $\varphi:U_S(p)\rightarrow {\cal U}$ in $C^s$ where $\varphi_*(V)$ is in standard form 
\begin{displaymath}
-\sum_{i=1}^{m_S(p)} x_i\frac{\partial}{\partial x_i}+\sum^s_{i=m_S(p)+1} x_i\frac{\partial}{\partial x_i}
\end{displaymath}
where $s$ is the dimension of $S$ and $m_S(p)$ is the Morse index of $f_{|S}$ at $p$\\
(b) $V(x)(f|_S)>0$ for $x\notin\cup_p U_S(p)$
\end{definition}

In the complex analytic case, the following definition of index has been introduced in \cite{GMP}.
\begin{definition}\label{def:morind}
Let $f:X\rightarrow\R$ be a Morse function on a purely $n$-dimensional Whitney stratified space $X$ and let $p$ be a critical point of $f$ belonging to a stratum $S$ of dimension $s$; we define the Morse index of $p$ as
$$m(p)=m_S(p)+n-s$$
where $m_S(p)$ is the standard Morse index of $f_{|S}$ at $p$.
\end{definition}
We recall that Goresky and MacPherson (see again \cite{GMP} and also \cite{GMP1})has related the vanishing of the intersection homology of a Morse pair to the index of a critical point, extending to the stratified case the analogous classical theorem for Morse theory on smooth manifolds, after replacing singular homology with intersection homology. 
\begin{theorem}\cite{GMP1}
For a proper Morse function $f:X\rightarrow\R$ on a Whitney stratified complex analytic variety $X$ with a critical point $p$ with critical value $v=f(p)$, the intersection homology $IH_i(X_{\leq v+\epsilon},X_{\leq v-\epsilon})$ of Morse data at $p$ vanishes for all $i\neq m(p)$.
\end{theorem}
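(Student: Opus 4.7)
The plan is to reduce the vanishing to a Lefschetz-type statement for the complex link of the stratum through $p$, via the main theorem of stratified Morse theory and Künneth for intersection homology. First, since $f$ is proper and $v=f(p)$ is the only critical value in $(v-\epsilon,v+\epsilon)$, a standard excision argument identifies $IH_i(X_{\leq v+\epsilon},X_{\leq v-\epsilon})$ with the relative intersection homology of the \emph{local Morse data} $(A,B)$ of $f$ at $p$, localized in a small stratified neighborhood of $p$. The main theorem of stratified Morse theory (\cite{GMP}, Part~I) then supplies a homeomorphism of pairs
\begin{equation*}
(A,B)\;\cong\;(J,K)\times(P,Q),
\end{equation*}
where $(J,K)$ is the tangential Morse data of $f_{|S}$ at $p$ on the stratum $S\ni p$, and $(P,Q)$ is the normal Morse data in a normal slice $N$. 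Since $S$ is smooth of complex dimension $s$, $(J,K)$ is a standard smooth disk pair of index $m_S(p)$, so its ordinary (and hence intersection) homology is concentrated in degree $m_S(p)$.

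Next I would invoke the Künneth formula for intersection homology, which applies because $(J,K)$ is a manifold pair, to obtain
\begin{equation*}
IH_i(A,B)\;\cong\;IH_{i-m_S(p)}(P,Q).
\end{equation*}
So the problem reduces to proving $IH_*(P,Q)$ is concentrated in degree $n-s$. This is where the complex analytic hypothesis enters decisively. Following \cite{GMP}, Part~II, one analyzes $f^c$ restricted to $N$ via its Milnor-type fibration: the monodromy of $f^c$ on a punctured disk in $\C$ acts on a nearby fiber, and after straightening the imaginary direction one identifies $(P,Q)$, up to homotopy equivalence, with a cone-type pair built from the \emph{complex link} $L$ of $S$ at $p$. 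Here $L$ is a compact Whitney stratified complex analytic space of complex dimension $n-s-1$, canonically attached to the stratum and independent of the choice of Morse function.

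The final and deepest ingredient is the Lefschetz-type vanishing for complex links, which yields that $IH_j(L)=0$ for $j>n-s-1$ (equivalently, $L$ is \emph{cohomologically Lefschetz} at middle perversity). Combining this with the cone formula for intersection homology and the long exact sequence of the cone pair collapses $IH_*(P,Q)$ to a single nonzero group in degree $n-s$. Substituting back via the Künneth isomorphism gives $IH_i(A,B)=0$ unless $i=m_S(p)+(n-s)=m(p)$, as required. I expect the Lefschetz vanishing for $L$ to be the main obstacle: it rests on the fact that $L$ is the Milnor fibre of a generic linear form on $N$ and therefore admits a natural holomorphic Lefschetz pencil, to which the intersection-cohomology Lefschetz hyperplane theorem applies; this geometric input is the real content of the theorem, while the reduction from $(A,B)$ to $(P,Q)$ is formal manipulation with stratified Morse theory.
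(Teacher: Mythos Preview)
The paper does not give its own proof of this statement: it is quoted as background, with the citation \cite{GMP1}, and no argument follows. So there is nothing in the paper to compare your proposal against.

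That said, your outline is essentially the Goresky--MacPherson argument from \cite{GMP1} and \cite{GMP}: decompose local Morse data as tangential times normal via the main theorem of stratified Morse theory, use K\"unneth (legitimate because the tangential factor is a smooth cell pair), identify the normal Morse data with the pair $(\mathrm{cone}(L),L)$ built from the complex link, and then invoke the Lefschetz-type vanishing for $IH_*(L)$ together with the cone formula. One small caution: to get concentration in a \emph{single} degree you need vanishing of $IH_j(\mathrm{cone}(L),L)$ on both sides of $n-s$, which in \cite{GMP} comes from combining the Lefschetz hyperplane theorem for $IH$ with intersection-homology Poincar\'e duality (or equivalently, Artin-type vanishing in both directions); your sketch only states the $j>n-s-1$ half explicitly. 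Apart from that, the plan is sound and matches the original source the paper is citing.
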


\subsection{Control systems}
We recall briefly the definitions of control data, controlled vector field and controlled lift.
(See \cite{Mat} or \cite{P} for more details.)
Let $X$ be a real $C^\infty$ manifold with a Whitney stratification $\CS$.
\begin{definition}
Let $S \in \CS$ be a stratum, and let $T_S$ be an open neighborhood of $S$ in $X$. A tubular projection
$\Pi_S:T_S \to S$ is a smooth submersion which is a retraction, and satisfies $\Pi_S|_S = id|_S$.
\end{definition}
\begin{definition}
Control data on $(X,\CS)$ is a collection $\{T_S,\Pi_S,\rho_S\}_{S \in \CS}$, where
$\Pi_S : T_S \to S$ is a tubular projection, and $\rho_S:T_S \to \R_{\geq 0}$ is
continuous function with $\rho^{-1}(0)=S$ with the following properties:
\begin{enumerate}
\item for each pair of strata $(S,R)$ with $T_S \cap R \neq \emptyset$ we have $S \subset \overline{R}$ (that is, $S<R$);
\item for each pair of strata $(S,R)$ with $S<R$, the map $(\Pi_S,\rho_S):T_S \cap R \to S \times \R_{\geq 0}$ is smooth and submersive;
\item for any two strata $S,R$ with $S<R$, the following compatibility conditions
hold
$$ \Pi_S \circ \Pi_R(x) = \Pi_S(x),\;\; \rho_S \circ \Pi_R(x) = \rho_S(x)$$
for any $x\in T_S\cap T_R$ with $\Pi_R(x)\in T_S$.
\end{enumerate}
\end{definition}
Two control data $\{T_S,\Pi_S,\rho_S\}_{S \in \CS}$ and $\{T'_S,\Pi'_S,\rho'_S\}_{S \in \CS}$ are equivalent over $S$ if there exists a neighbourhood $\tilde{T}_S\subset T_S\cap T'_S$ such that
$$\Pi_{S|\tilde{T}_S}=\Pi'_{S|\tilde{T}_S}$$
$$\rho_{S|\tilde{T}_S}=\rho'_{S|\tilde{T}_S}$$
We will say that $(X,\CS)$ endowed with an equivalence class of control data $\{T_S,\Pi_S,\rho_S\}_{S \in \CS}$ is a controlled space.
\begin{definition}
Control data $\{T_S,\Pi_S,\rho_S\}_{S \in \CS}$ on $(X,\CS)$ are said to be compatible with a stratified map $h:X\rightarrow N$, where $N$ is a manifold with its trivial stratification, if for every stratum $S$ and all $x\in T_S$
$$h\circ\Pi_S(x)=h(x).$$
\end{definition}
\begin{theorem}
For every Whitney stratified space $X$ (and every smooth stratified submersion $h:X\rightarrow N$ to a manifold $N$) there exist control data (compatible with $h$).
\end{theorem}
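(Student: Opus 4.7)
The statement is the classical Thom--Mather existence theorem for control data, so the plan is the standard depth-induction argument. I would first order the strata by the partial order $S<R\iff S\subset\overline R$, noting that since $X$ is (locally) a finite union of strata, depth is well-defined and the strata of minimal depth are closed smooth submanifolds of $M$. I would construct the control data stratum by stratum, in increasing order of depth, building $(T_S,\Pi_S,\rho_S)$ for each $S$ so that on already-treated neighbors $R>S$ it automatically satisfies the Mather compatibility identities
\[
\Pi_S\circ\Pi_R=\Pi_S,\qquad \rho_S\circ\Pi_R=\rho_S
\]
on the overlap where it needs to hold.

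For the base case, when $S$ is closed in $X$, I would fix a Riemannian metric on $M$ and take $T_S$ to be a geodesic tubular neighborhood of $S$ in $M$ (intersected with $X$), $\Pi_S$ the nearest-point projection, and $\rho_S$ the squared distance to $S$. For the inductive step, assuming control data have been defined on $X_{<k}:=\bigcup_{\operatorname{depth}S<k}S$ and that $\dim S=k$, I would first define $(\Pi_S,\rho_S)$ on the portion of $T_S$ that already lies in some previously controlled $T_R$ by \emph{forcing} the compatibility condition: on $T_S\cap T_R$ set $\Pi_S:=\Pi_S^{(R)}\circ\Pi_R$ and $\rho_S:=\rho_S^{(R)}\circ\Pi_R$ where $\Pi_S^{(R)},\rho_S^{(R)}$ are a preliminary tubular data for $S$ inside the manifold-with-corners piece coming from the previous step. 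The crucial point is that these locally defined pieces agree on triple overlaps thanks to the transitivity built into the previous stages, and the Whitney condition (b) guarantees that these partial data extend smoothly across strata incident to $S$. Outside the already-controlled part, one extends using a preliminary tube and a smooth partition of unity adapted to the stratification to glue to the forced definition; one has to check that after this gluing $(\Pi_S,\rho_S)$ remains a submersion on each incident $R$, which is where Whitney (b) is again used, ensuring that the tangent planes of nearby strata converge to a plane containing $T_pS$ so that projection to $S$ stays a submersion on $R$ near $S$.

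To handle the compatibility with a stratified submersion $h:X\to N$, I would at every stage work fiberwise over $N$: pick a Riemannian metric on $M$ for which the horizontal distribution (orthogonal to fibers, restricted to each stratum) is a smooth stratified distribution, and perform the tubular neighborhood construction inside the fibers of $h$. Since $h$ is a submersion on each stratum and compatible with the stratification, the inductive extension can be done fiberwise and smoothly in the $N$-direction, so that $h\circ\Pi_S=h$ automatically; the preliminary $\Pi_S^{(R)}$ is chosen fiberwise from the start.

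The main obstacle, and the reason this result is not trivial, is the inductive extension step: guaranteeing that the data defined by composition with $\Pi_R$ on each $T_S\cap T_R$ are mutually consistent on overlaps $T_S\cap T_R\cap T_{R'}$ and simultaneously extend to a \emph{smooth submersive} pair $(\Pi_S,\rho_S)$ on a full neighborhood of $S$. Consistency on triple overlaps is forced by the already-established identities among the $(\Pi_R,\rho_R)$ from the previous inductive stage; smoothness and the submersion property along each incoming stratum are the technical heart, handled by Whitney regularity and a careful partition-of-unity gluing. Once this step is in place, iterating over all strata yields the required control data.
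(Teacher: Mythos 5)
Your plan is the classical Thom--Mather depth/dimension induction (preliminary tubes, forcing compatibility by composing with the already-built $\Pi_R$, Whitney regularity for submersivity of $(\Pi_S,\rho_S)$ on incident strata, gluing, and working fiberwise over $N$ for compatibility with $h$), which is exactly the argument of the sources the paper cites; the paper itself states this theorem without proof, referring to Mather and Pflaum. So the proposal is correct and takes essentially the same (standard) approach, with only minor sketch-level imprecision in the partition-of-unity gluing step, which in Mather's treatment is handled via the uniqueness/isotopy theorem for tubular neighborhoods rather than by averaging the maps directly.
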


\begin{definition}
A vector field $V:X\rightarrow TX$ over a controlled space $X$ is said to be controlled if there exist control data $\{T_S,\Pi_S,\rho_S\}_{S \in \CS}$ of $X$ such that for every $S<R$
$$d\Pi_{S_{|T_S\cap R}}\circ V_{|T_S\cap R}=V_{|S}\circ\Pi_{S_{|T_S\cap R}}$$
$$d\rho_{S_{|T_S\cap R}}\circ V_{|T_S\cap R}=0$$
\end{definition}
\begin{theorem}
Given $X$, $N$ and $h$ as above, there exists for every smooth vector field $W:N\rightarrow TN$ a controlled vector field $V:X\rightarrow TX$ such that
$$dh\circ V=W\circ h$$
\end{theorem}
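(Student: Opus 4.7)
The plan is to construct $V$ by induction on the strata of $X$, ordered by dimension (or equivalently by the partial order $S<R$), extending at each stage the lift from the already-treated strata to a new one together with its tubular neighbourhood. The basic building block is a smooth lift of $W$ through $h|_S\colon S\to N$ on a single stratum $S$, followed by the horizontal extension of this lift through the tubular projection $\Pi_S$ to $T_S$; partitions of unity subordinate to the cover by tubular neighbourhoods are then used to glue the local pieces into a globally defined controlled vector field.

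In detail, I first lift $W$ on each stratum $S$: since $h|_S\colon S\to N$ is an ordinary smooth submersion of manifolds, a smooth vector field $V|_S$ with $dh|_S\circ V|_S=W\circ h|_S$ exists by the classical partition-of-unity argument. Assuming inductively that $V$ has been constructed on every stratum $S'<S$ and extended in a controlled way to $\bigcup_{S'<S}T_{S'}$, I extend $V|_S$ from $S$ into $T_S$ as follows. For each $R>S$, axiom (2) of the control data asserts that $(\Pi_S,\rho_S)\colon T_S\cap R\to S\times\R_{\geq0}$ is a submersion, so locally there exist vector fields $\tilde V$ on $T_S\cap R$ satisfying $d\Pi_S\circ\tilde V=V|_S\circ\Pi_S$ and $d\rho_S\circ\tilde V=0$; these local choices paste together by a partition of unity pulled back from $S$. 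The lift condition $dh\circ\tilde V=W\circ h$ is automatic, because compatibility of the control data with $h$ gives $h\circ\Pi_S=h$ and hence $dh=dh|_S\circ d\Pi_S$, while $V|_S$ already lifts $W$.

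The real work is reconciling, on a common higher stratum $R$, the extensions produced by two different lower strata $S_1,S_2<R$. Here the compatibility axiom (3), namely $\Pi_{S_1}\circ\Pi_{S_2}=\Pi_{S_1}$ and $\rho_{S_1}\circ\Pi_{S_2}=\rho_{S_1}$ whenever both sides make sense, is essential: after shrinking the tubular neighbourhoods so that this axiom holds where needed, any vector field which is controlled with respect to $\Pi_{S_2}$, and whose value on $S_2$ was itself built controlled relative to $\Pi_{S_1}$, is automatically controlled with respect to $\Pi_{S_1}$. A partition of unity on $R$ subordinate to the cover by the shrunken tubular neighbourhoods together with the complement of their closures then glues the extensions; since the two control equations are linear in $V$, they are preserved under convex combinations, and the inductive hypothesis is restored for $R$.

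The main obstacle is the careful shrinking of the neighbourhoods $T_S$ and the choice of the associated partition of unity so that all pairwise compatibility conditions between the $\Pi_S$'s are simultaneously active when one patches, and so that the two defining equations of controlledness remain valid for every smaller stratum after convex combination. This is exactly where the Whitney hypothesis, via the existence theorem for control data compatible with $h$ stated just above, becomes indispensable: it guarantees that a coherent system of tubular projections can be chosen at the outset, so that the horizontal extension through each $\Pi_S$ is consistent with the horizontal extensions through the other $\Pi_{S'}$ and with the lift through $h$.
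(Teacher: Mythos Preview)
The paper does not give its own proof of this theorem: it is stated in Section~2.2 as a preliminary fact from Thom--Mather theory, with an explicit reference to \cite{Mat} and \cite{P} for details. So there is nothing in the paper to compare your argument against.

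That said, your outline is precisely the standard construction found in those references: induct on strata by dimension, lift $W$ smoothly on each stratum via the submersion $h|_S$, extend horizontally through $(\Pi_S,\rho_S)$ using the submersivity axiom~(2), and use the commutation axiom~(3) to make the extensions from different lower strata compatible before patching with a partition of unity. The one place where your sketch is a bit loose is the claim that ``a partition of unity pulled back from $S$'' suffices to glue the local horizontal lifts on $T_S\cap R$; in the actual argument one takes the partition of unity on $R$ itself (or on the ambient manifold), subordinate to the cover by the shrunken tubes together with the complement of their closures, and one must check that the convex combination still satisfies both control equations for \emph{every} stratum below $R$, not just the immediate predecessors. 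This is exactly the point you flag as ``the main obstacle,'' and the resolution is as you say: axiom~(3) propagates controlledness downward through chains $S_1<S_2<R$. With that caveat, your proposal is correct and matches the classical proof the paper is citing.
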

As a special case we have:
\begin{corollary}
If $N=S\subset X$ and $V$ is a vector field on $S$, then it can be lifted to a controlled vector field on $T_S$.
\end{corollary}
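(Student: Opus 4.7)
The plan is to deduce the corollary directly from the preceding lifting theorem by taking the ambient manifold $N$ to be the stratum $S$ itself and using the tubular projection $\Pi_S : T_S \to S$ as the stratified submersion $h$. The content to verify is minimal: essentially we just need to check that the data of the corollary satisfies the hypotheses of the theorem.

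First, I would restrict the ambient Whitney stratified structure of $X$ to the open set $T_S$, so that $T_S$ inherits a Whitney stratification whose strata are of the form $T_S \cap R$ for strata $R$ with $S \leq R$. Second, I would verify that $\Pi_S : T_S \to S$ is a smooth stratified submersion into the (trivially stratified) smooth manifold $S$. On the stratum $S$ itself this is immediate since $\Pi_S|_S = \mathrm{id}_S$, and on any higher stratum $R$ with $S < R$ the condition that $(\Pi_S,\rho_S)$ be smooth and submersive on $T_S \cap R$, which is part of the definition of control data, implies in particular that $\Pi_S|_{T_S \cap R}$ is a smooth submersion onto $S$.

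With this setup in place, apply the previous theorem to the smooth stratified submersion $\Pi_S : T_S \to S$ and to the smooth vector field $W := V$ on $N := S$. This yields a controlled vector field $\WT{V} : T_S \to TT_S$ on the controlled space $T_S$ satisfying
\begin{equation*}
d\Pi_S \circ \WT{V} = V \circ \Pi_S.
\end{equation*}
Restricting this relation to $S$ and using $\Pi_S|_S = \mathrm{id}_S$, so that $d\Pi_S|_S = \mathrm{id}_{TS}$, we obtain $\WT{V}|_S = V$, showing that $\WT{V}$ is indeed an extension (a lift) of $V$.

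There is no real obstacle here; the only point that deserves a sentence of justification is that the control data on $X$, when restricted to $T_S$, indeed make $T_S$ into a controlled Whitney stratified space for which $\Pi_S$ is a compatible stratified submersion — but this is essentially the content of conditions (2) and (3) in the definition of control data, so the corollary follows as an immediate specialization of the lifting theorem.
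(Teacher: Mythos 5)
Your proposal is correct and is exactly the argument the paper intends: the corollary is stated as a special case of the preceding lifting theorem, obtained by taking the stratified submersion $h$ to be the tubular projection $\Pi_S\colon T_S\to S$ (with $T_S$ carrying the restricted stratification) and $W:=V$, and your check that $\Pi_S|_S=\mathrm{id}_S$ forces the lift to restrict to $V$ on $S$ is the only point needing mention.
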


We will also use a slightly different definition of controlled conditions, which we explain now:
\begin{definition}
\label{control}
A vector field $V:X\rightarrow TX$ over a controlled space $X$ is said to be weakly controlled if there exist control data $\{T_S,\Pi_S,\rho_S\}_{S \in \CS}$ of $X$ such that for every $S<R$
 $$d\Pi_{S_{|T_S\cap R}}\circ V_{|T_S\cap R}=V_{|S}\circ\Pi_{S_{|T_S\cap R}}$$
$$|d\rho_{S_{|T_S\cap R}}\circ V_{|T_S\cap R}|\leq A\rho_{S_{|T_S\cap R}}(x)$$ 
for some positive constant $A$
\end{definition}
(Weak) control conditions are interesting because they ensure that a vector field which satisfies them has a continuous flow and its gradient lines do not leave a stratum in finite time (see \cite{P}). However we need weak controllability to allow gradient lines to approach a point on a stratum of smaller dimension in an infinite time, which would not be possible with the standard controllability.

We also recall the weakly Lipschitz condition introduced by Verdier in \cite{V}
(In \cite{P}, it is called Verdier condition).
\begin{definition}
Let $A$ be a subanalytic set of an Euclidean vector space $V$ and $\{S_i\}$ a Whitney stratification of $A$: we say that a function $f:A\rightarrow\R$ is weakly Lipschitz if for every stratum $S_i$, $f_{|S_i}$ is smooth and for every $x\in S_i$ there exists a neighbourhood $U$ of $x$ in $V$ and a constant $C$ such that for every $x'\in U\cap S_i$ and for every $y\in U\cap A$
$$|f(x')-f(y)|\leq C\|x-y\|$$
\end{definition}

A weakly Lipschitz function is continuous but in general is not Lipschitz. The above definition can be extended to maps between analytic varieties by requiring the condition to hold locally and to vector-valued functions by imposing it to each component.

\begin{definition}
If $X$ is an analytic variety, $A$ a subanalytic set of $X$ endowed with a Whitney stratification $\cal S$, we say that a stratified vector field $V$ on $A$ is weakly Lipschitz if for every local immersion $\varphi$ of $X$ into a smooth manifold $M$, the vector field induced by $V$ on $\varphi^\ast (TM|_A)$ is a weakly Lipschitz section.
\end{definition}
We recall the theorem of Verdier (\cite{V}) which shows the usefulness of weakly Lipschitz condition.
\begin{theorem}
A stratified weakly Lipschitz vector field on a closed subanalytic set admits a flow
which is stratum preserving and is also weakly Lipschitz.
\end{theorem}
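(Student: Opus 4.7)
The plan is to construct the flow stratum by stratum via standard smooth ODE theory, and then glue these pieces using the weakly Lipschitz estimate to control continuity across strata. On each stratum $S$ of $A$ the restriction $V|_S$ is smooth and tangent to $S$, so Picard--Lindel\"of produces a smooth local flow $\phi^S_t$ preserving $S$. The real content of the theorem is that these piecewise flows fit together into a globally defined, continuous, stratum-preserving flow on $A$ which is itself weakly Lipschitz in the initial condition.

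The key technical step is a Gronwall-type comparison. Fix $x$ in a stratum $S_i$ and choose a weakly Lipschitz neighbourhood $U$ of $x$ with constant $C$, so that $|V(x') - V(y)| \leq C\|x' - y\|$ for any $x' \in U \cap S_i$ and any $y \in U \cap A$. Applying this bound along a trajectory $\phi^{S_i}_t(x)$ on $S_i$ and a trajectory $\phi^{S_j}_t(y)$ on an adjacent stratum $S_j$ with $S_i < S_j$, and differentiating $\|\phi^{S_i}_t(x) - \phi^{S_j}_t(y)\|^2$, one obtains
\begin{equation*}
\|\phi^{S_i}_t(x) - \phi^{S_j}_t(y)\| \leq e^{Ct}\|x - y\|,
\end{equation*}
valid as long as both trajectories remain inside $U$. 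This single estimate simultaneously yields existence of the full flow $\phi_t \colon A \to A$, after a compactness argument that covers a fixed-time trajectory by finitely many weakly Lipschitz neighbourhoods; stratum preservation, from the ODE theory on each $S$ together with tangency of $V|_S$; and the weakly Lipschitz dependence on the initial condition, which is essentially a rephrasing of the Gronwall bound itself.

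The main obstacle I anticipate is justifying that the flow neither blows up in finite time nor escapes a stratum. Escape across a boundary is ruled out by uniqueness of solutions on $S$ combined with tangency of $V$ to $S$; finite-time blow-up is controlled via closedness of $A$ in the ambient manifold and local boundedness of $V$. A more subtle point is that the weakly Lipschitz condition is asymmetric: the estimate only controls $V(y) - V(x')$ when $x'$ lies on the stratum of the base point $x$. The Gronwall comparison must therefore be anchored at the smaller stratum $S_i$, with $S_j$ incident to it, which is precisely the direction in which information must propagate as a trajectory on $S_j$ approaches one on $S_i$. Carrying out this asymmetry in the local immersions $\varphi$ used in the definition of weakly Lipschitz vector field, while keeping the constants uniform on a neighbourhood of a fixed-time trajectory, is where I expect the main technical work to lie.
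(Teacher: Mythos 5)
The paper does not actually prove this statement: it is quoted as Verdier's theorem and the proof lives in \cite{V}. Your strategy---integrate $V$ on each stratum by Picard--Lindel\"of and compare trajectories on adjacent strata by a Gronwall estimate anchored at the smaller stratum---is essentially Verdier's original argument (his ``rugosity'' is the weakly Lipschitz condition used here), so the route is the right one; note also that the estimate you use, with $\|x'-y\|$ on the right-hand side, is the correct Verdier formulation (the paper's definition has a typo, writing $\|x-y\|$), and it is the form needed for the comparison.

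There is, however, a genuine gap at the point you dismiss as ``escape across a boundary''. Uniqueness of solutions on $S_j$ together with tangency of $V|_{S_j}$ cannot rule out the dangerous scenario, namely a trajectory inside a larger stratum $S_j$ reaching the frontier $\overline{S_j}\setminus S_j$ (a smaller stratum) in finite time: the would-be limit point does not lie in $S_j$, so uniqueness within $S_j$ says nothing about it, and this is precisely where stratum preservation---the main content of the theorem---could fail. The cure comes from your own computation, provided you keep both sides of the differential inequality: from $\bigl|\tfrac{d}{dt}\|\phi^{S_i}_t(x)-\phi^{S_j}_t(y)\|^2\bigr|\le 2C\,\|\phi^{S_i}_t(x)-\phi^{S_j}_t(y)\|^2$ one gets not only the upper bound $e^{Ct}\|x-y\|$ you state but also the lower bound $e^{-Ct}\|x-y\|$. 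If a trajectory $\phi^{S_j}_t(y)$ converged to a point $q\in S_i$ at a finite time $T$, comparing it with the $S_i$-trajectory passing through $q$ at time $T$ and letting $t\to T^-$ in the two-sided estimate would force the two trajectories to coincide at earlier times, a contradiction; equivalently, the distance to the comparison trajectory decays at most exponentially and cannot vanish in finite time. So the non-escape claim must be derived from this two-sided Gronwall bound (run with the rugosity constant made uniform along a compact piece of the $S_i$-trajectory, via the covering argument you already mention, and inside a local immersion so that the Euclidean comparison makes sense), not from uniqueness and tangency; with that correction your outline does match Verdier's proof, including the weakly Lipschitz dependence of the flow on initial conditions.
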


One of the main tools we use in this paper is the well-known Thom-Mather isotopy lemma:

\begin{theorem}
Let $h:X\rightarrow N$ be a proper controlled submersion (that is, $h$ is proper, controlled and its restriction to each stratum is submersive), then there exists a covering of $N$ by open subsets $U$ such that for each $U$ there is a stratified space $Y$ and an isomorphism of stratified spaces $\psi:Y\times U\rightarrow h^{-1}(U)$ such that $h_{|h^{-1}(U)}\circ \psi(y,x)=x$. We say that $h$ is locally trivial.
\end{theorem}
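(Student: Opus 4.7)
The plan is to reduce to a coordinate neighborhood in $N$ and then build the trivialization via flows of controlled lifts of coordinate vector fields. Since the claim is local on $N$, I can pick any point $x_0 \in N$ and a chart $\varphi:U \to \R^n$ with $\varphi(x_0)=0$, where $n = \dim N$. On the image I have the coordinate vector fields $\partial/\partial x_i$, which I pull back to smooth vector fields $W_1,\dots,W_n$ on $U$. The first step is to invoke the lifting theorem quoted earlier in the preliminaries: for each $W_i$ there exists a controlled vector field $V_i : h^{-1}(U) \to T(h^{-1}(U))$ such that $dh\circ V_i = W_i \circ h$.

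Next I need flows. A controlled vector field is in particular tangent to the strata and satisfies the compatibility conditions with tubular projections and distance functions; by the standard result recalled implicitly in the section on control systems (and stated e.g.\ in Verdier's theorem), such a vector field integrates to a stratum-preserving continuous flow $\Phi^i_t$ defined wherever the flow line stays in a compact set. Properness of $h$ takes care of that: the orbit of a point $y \in h^{-1}(x_0)$ under $V_i$ projects by $h$ to an integral curve of $W_i$, which exists for all time in $U$, so by properness the orbit itself stays in the compact preimage of a compact neighbourhood in $U$ and exists for the same time interval.

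With flows in hand, I define the trivialization. Take $Y := h^{-1}(x_0)$ with its induced stratification, and set
\[
\psi:Y\times U\longrightarrow h^{-1}(U),\qquad \psi(y,x)=\Phi^n_{x_n}\circ\Phi^{n-1}_{x_{n-1}}\circ\cdots\circ\Phi^1_{x_1}(y),
\]
where $(x_1,\dots,x_n)=\varphi(x)$. By construction $h\circ\psi(y,x)=\varphi^{-1}(x_1,\dots,x_n)=x$, because each $h\circ\Phi^i_t$ translates the $i$-th coordinate of $h$ by $t$. An inverse is obtained by flowing backwards: given $z \in h^{-1}(U)$ with $h(z)=x=\varphi^{-1}(x_1,\dots,x_n)$, set $\psi^{-1}(z)=\bigl(\Phi^1_{-x_1}\circ\cdots\circ\Phi^n_{-x_n}(z),\,x\bigr)$. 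Both $\psi$ and $\psi^{-1}$ are continuous since controlled flows are continuous, and each $\Phi^i_t$ preserves strata, so $\psi$ carries $S_0 \times U$ to the stratum of $h^{-1}(U)$ that contains $\psi(S_0\times\{x_0\})$; this gives the isomorphism of stratified spaces.

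The main obstacle is the existence and global continuity of the flows of the controlled lifts, which is where both \emph{properness} and the full strength of the control conditions are used: properness prevents escape to infinity along the fibre direction, while the controlled compatibility $d\Pi_S \circ V = V\circ\Pi_S$ and $d\rho_S\circ V=0$ is what forces the flow lines on the higher stratum $R$ to stay at fixed distance from the lower stratum $S$ and to project correctly, which is precisely what prevents a flow line from jumping strata in finite time and what yields continuity across strata. Once these flows are in place, the rest of the argument is a formal commuting-vector-field computation; the subtlety is entirely in the controlled integration theorem, which is the reason the preliminaries developed the control-system machinery before stating this lemma.
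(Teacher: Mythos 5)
This statement is the Thom--Mather first isotopy lemma, which the paper does not prove at all: it is quoted in the preliminaries as a standard tool, with the proof residing in the cited references (Mather's notes, Pflaum's book). Your proposal reproduces the canonical argument from those sources --- localize on $N$, lift the coordinate vector fields to controlled vector fields via the lifting theorem stated just above in the paper, integrate, and compose the flows in a fixed order to build the trivialization --- and the skeleton is sound; note that no commutativity of the lifted flows is needed, since the telescoping cancellation $\Phi^1_{-x_1}\circ\cdots\circ\Phi^n_{-x_n}\circ\Phi^n_{x_n}\circ\cdots\circ\Phi^1_{x_1}=\mathrm{id}$ works in either case, so your closing remark about a ``commuting-vector-field computation'' is an unnecessary aside. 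Two small repairs: first, the continuity and stratum-preservation of the flow of a \emph{controlled} vector field is the genuinely hard ingredient here, and it is the theorem of Mather (recalled in the paper with reference to Pflaum), not Verdier's theorem --- Verdier's result concerns weakly Lipschitz vector fields, a different hypothesis that a controlled lift need not satisfy, so that citation should be replaced; second, you should take $U$ to be a relatively compact coordinate cube (or other convex chart image) with compact closure inside the chart, so that the axis-parallel paths $(x_1,0,\dots,0),\ (x_1,x_2,0,\dots,0),\dots$ stay in $U$, the lifted flows never leave $h^{-1}(U)$, and properness of $h$ gives the compactness needed for completeness of the flows. With these adjustments your sketch is essentially the standard proof of the quoted lemma.
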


\section{Previous results}
We explain  previous results of Ludwig \cite{L} and Grinberg \cite{Gr} and their relations to this work.
\subsection{Ludwig's Morse-Witten-Smale complex}
First, Ludwig has constructed a version of  Morse-Smale-Witten homology on a stratified space  whose homology is isomorphic to singular homology,
{\it not} intersection homology.
\begin{theorem}\cite{L}
Let $X$ be a compact abstract stratified space and $(f,g)$ a stratified Morse pair satisfying the Morse-Smale condition. Then there is an isomorphism
$$H_{\ast}(f,g;\Z_2)\cong H_{sing}(X;\Z_2)$$
\end{theorem}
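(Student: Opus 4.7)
The plan is to mirror the smooth Morse-Smale-Witten construction in the stratified category: form a complex generated over $\Z_2$ by critical points of $f$, with differential counting gradient trajectories, and then identify this complex with the singular chain complex via a filtration by unstable sets. Because the target here is singular (not intersection) homology, the natural dimension assigned to the unstable set $W^u(p)$ at a critical point $p$ on a stratum $S$ is the ordinary tangential Morse index $m_S(p)$, not the shifted stratified index $m_S(p)+n-s$ that appears in the main theorem of this paper.

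First I would produce a stratified gradient-like vector field $V$ that is at least weakly controlled in the sense of Section~2. Using the controlled-lift corollary stratum by stratum, and gluing lifts via a Mather-type partition of unity along the control data, one obtains $V$ satisfying the weak control condition, and by Verdier's theorem its flow $\varphi_t$ is continuous and stratum-preserving. The Morse-Smale hypothesis ensures that, stratum by stratum, stable and unstable manifolds meet transversally, so for critical points $p\in S$ and $q\in T$ the unparameterised trajectory space
\[
\mathcal M(p,q) \;=\; \bigl(W^u(p)\cap W^s(q)\bigr)/\R
\]
is a smooth manifold of dimension $m_S(p)-m_T(q)-1$. When this dimension is $0$ one sets $\partial p = \sum_q \#\mathcal M(p,q)\, q \pmod 2$, and the identity $\partial^{2}=0$ follows by compactifying the $1$-dimensional moduli spaces to $1$-manifolds with boundary whose endpoints are precisely broken two-step trajectories.

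To identify the resulting Morse homology with singular homology, filter $X$ by
\[
X_k \;=\; \bigcup\{\,\overline{W^u(p)} : m_S(p)\le k\,\}
\]
and show that $(X_k,X_{k-1})$ is a good pair in which each $W^u(p)$ contributes an open cell of dimension $m_S(p)$. The cellular chain complex of this filtration is then canonically isomorphic to the Morse complex, via the trajectory-count interpretation of the cellular boundary, and to singular homology by the standard cellular-equals-singular theorem for CW-type filtrations.

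The principal obstacle is controlling the behaviour of unstable manifolds near lower-dimensional strata. One must show that $\overline{W^u(p)}$ is the continuous image of a closed disc of the expected dimension, rule out pathological accumulation of trajectories onto strata of the same or lower Morse index, and compactify $\mathcal M(p,q)$ even when broken-trajectory limits pass through critical points on distinct strata. The weak controllability of $V$, together with the Verdier-Lipschitz bounds recalled earlier, provide the analytic machinery for this, but weaving them into a coherent cellular structure across stratum boundaries, and ensuring that a trajectory accumulating on a small stratum either limits to a critical point there or escapes through a chain of intermediate critical points, is the delicate technical core of the argument.
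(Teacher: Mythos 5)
This statement is Ludwig's theorem, which the paper only recalls (Section~3.1) without proof; but measured against Ludwig's actual framework, your sketch has a genuine gap at its foundation. The theorem concerns a \emph{given} stratified Morse pair $(f,g)$: by definition the vector field is $\nabla_g f$, required to be controlled with respect to some control system and, near each critical point, a \emph{radial extension} of $\nabla_g f|_S$ (a controlled lift of the tangential gradient plus a bounded term with factor $\rho_S^2$ plus a lift by $\rho_S$ of $-t\,\partial/\partial t$). Your first step instead manufactures a new weakly controlled gradient-like field $V$ by stratumwise controlled lifts and a partition of unity; this both discards the hypothesis (the Morse--Smale condition is imposed on the flow of $\nabla_g f$, not on a field of your choosing) and loses exactly the structural input that makes the rest of the argument work.

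That input is what justifies your unproved assertion that $W^u(p)$ is a cell of dimension $m_S(p)$. The radial $-t\,\partial/\partial t$ component forces trajectories to flow only from larger to smaller strata, so the unstable set of a critical point $p\in S$ stays inside $S$ and is a smooth $m_S(p)$-dimensional submanifold, while only the stable set is genuinely stratified. For a general stratified gradient-like field of the kind you construct this is simply false: the present paper's main theorem produces fields whose unstable sets have dimension $m_S(p)+n-s$, so the ``natural dimension'' claim in your first paragraph is not a convention but a consequence of the tangential/radial condition you never invoke. Without it, your filtration $X_k=\bigcup\{\overline{W^u(p)}:m_S(p)\le k\}$ need not be a CW-type filtration with cells of the stated dimensions, the dimension formula $m_S(p)-m_T(q)-1$ for $\mathcal M(p,q)$ across strata is unjustified, and the compactness and gluing problems you correctly flag as ``the delicate technical core'' have no mechanism behind them --- in Ludwig's setting they are tamed precisely because broken limits can only descend through strata and the control data give normal coordinates in which the flow is radial. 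To repair the sketch you should keep $\nabla_g f$ as given, use the radial-extension property to prove $W^u(p)\subset S$ and the cell structure, and only then run the cellular-filtration comparison with singular homology.
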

The Morse-Smale-Witten homology above is defined by a ``Morse pair'' which needs a few explanations:
on a compact abstract stratified space, defined as a stratified space admitting a controlled system (observe that, as a consequence, these spaces, of which Whitney stratified spaces are examples, possess a locally cone-like structure) she first considers a somewhat general class of ``Morse" functions.
More precisely, a point $p$ in a stratum $S$ is critical for a function $f$ if the restriction $f_{|S}$ has a critical point at $p$, and
it is a non-degenerate critical point if it is non-degenerate both in the tangential and normal direction.

She considers stratified metrics $g$ which are in some sense compatible with the given control systems, and whose existence is shown, to construct the stratified gradient vector field $\nabla_gf$ of a function $f$. She defines a critical point to be non-degenerate if, not only it is non-degenerate for $f_{|S}$ but also if there exists a control system with respect to which $\nabla_gf$ is a radial extension of $\nabla_gf_{|S}$. By radial extension of a vector field $V_S$ on $S$ it is meant a vector field $V$ extending $V_S$ to $T_S$ as sum of a controlled lift of $V_S$ to $T_S$, a stratified bounded vector field on $T_S$ with factor $\rho_S^2$ and a controlled lift by $\rho_S$ of the vector field $-t\partial/\partial t$ on $\R$. 

Note that due to the part  $-t\partial/\partial t$, non-degenerate stratified gradient vector field $\nabla_gf$ 
are forced to have a  flow to go only from larger to smaller strata. (This asymmetry seems to be related to the non-existence of
Poincar\'e duality on singular homology)

Now a stratified Morse pair is a pair $(f,g)$, where $f$ has no degenerate critical points and $\nabla_gf$ is a controlled vector field with respect to some control system. In \cite{L} the existence of such stratified Morse pair is proved.
As the flows go only from larger to smaller strata, the unstable set is a smooth submanifold contained in the stratum $S$ to which $p$ belongs (whose dimension so is well-defined and used as Morse index of the point $p$), while the stable set is a stratified space. With some transversality assumptions as in the smooth case, a Morse-Smale-Witten homology is constructed and showed to be isomorphic to singular homology.

However a generic Morse function does not define a Morse pair: consider a stratified space $X$ which is given by two 2-spheres glued at a single point, say $p$, which we regard as a singular stratum. Then, a function $f$ on $M$ gives rise to a Morse pair only if $f$ has a local minimum at the singular stratum $p$.
And in the case $f$ gives rise to a Morse pair, $-f$ does not.

In our case, instead we fix any Morse function and construct a gradient-like vector field with respect to this function. Also, we expect that such an approach, in the complex analytic case, will eventually provide a Morse-Smale-Witten complex for intersection homology (instead of singular homology).

\subsection{Grinberg's proof of the conjecture (\ref{thm:main}) up to a fuzz}
Now, we explain the result of Grinberg \cite{Gr}. There the main purpose is the construction of a self-indexing Morse function for a complex stratified space, which was proved by means of the following theorem.
\begin{theorem}\cite{Gr}
Let $X=\C^n$, ${\cal S}$ an algebraic Whitney stratification of $X$, $p\in X$ and $\Delta$ an open subset of the set of non-degenerate covectors at $p$ (that is, those covectors which does not annihilate any generalized tangent plane at $p$). Then there exists $f\in\Delta$, which may be considered as a linear function $f:X\rightarrow\R$, a closed ball $B$ around $p$ and a closed real semi-algebraic set $K\subset B$ such that
\begin{enumerate}
\item $\dim_\R K\cap S\leq \dim_\C S$ for every $S\in{\cal S}$;
\item $f^{-1}(0)\cap K=\{p\}$;
\item for every open $U\supset K$ there exists an ${\cal S}$-preserving $\nabla f$-like vector field $V$ on $B$ with stable and unstable sets contained in $U$ (here an ${\cal S}$-preserving gradient-like vector field is a controlled gradient-like vector field with respect to some control system).
\end{enumerate}
\end{theorem}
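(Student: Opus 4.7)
The plan is to exploit the complex structure, together with Lê's theory of polar curves, to build a totally-real semi-algebraic set $K$ inside $B$, and then assemble a controlled gradient-like vector field whose forward and backward dynamics concentrate in any prescribed neighborhood of $K$.

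First I would choose $f \in \Delta$ sufficiently generic so that, letting $S_0$ denote the stratum containing $p$, (i) $f|_{S_0}$ has a non-degenerate critical point at $p$, and (ii) for every stratum $R$ with $p \in \overline{R}$, the relative polar curve $\Gamma_{f,R}$ of $f$ on $R$, in the sense of Lê and Teissier, is either empty or a complex one-dimensional reduced set whose branches all pass through $p$. Openness of $\Delta$ and the standard genericity statements for polar varieties guarantee such an $f$ exists; non-degeneracy of the covector is precisely the input needed for transversality of $df$ to each generalized tangent plane.

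Second, I would build $K$ stratum by stratum. On $S_0$ take $K \cap S_0$ to be the closure in $B$ of the unstable manifold of $p$ under the Morse gradient of $f|_{S_0}$; this has real dimension $m_{S_0}(p) \leq \dim_\C S_0$. On a stratum $R$ of complex dimension $r$, set
\[
K \cap R \;=\; \{x \in R \cap B : \operatorname{Im} f(x) = 0,\ \operatorname{Re} f(x) \leq 0\} \cap \mathcal N(\Gamma_{f,R}),
\]
where $\mathcal N(\Gamma_{f,R})$ is a sufficiently thin neighborhood of the polar curve on $R$. Because $f$ restricted to each branch of $\Gamma_{f,R}$ is essentially a local holomorphic coordinate, the resulting real locus is a thimble-like set fibering over the negative real half-line, of real dimension at most $r$. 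Condition (1) then follows stratum by stratum, and condition (2) follows because the only point of $\Gamma_{f,R}$ where $\operatorname{Re} f = 0$ is $p$ itself.

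Third, I would construct $V$ by assembling a gradient-like field on each stratum and gluing with controlled lifts. On $S_0$ use the standard Morse gradient of $f|_{S_0}$; on a higher stratum $R$ use a field that pushes points rapidly onto $\mathcal N(\Gamma_{f,R})$ and then along the real $f$-direction, extended to the tube $T_R$ via a controlled lift and damped in the radial direction by a smooth function of $\rho_R$ supported in $U$. A partition of unity over the tubes, together with the standard controlled-lift theorems recalled in section 2, produces a stratified $\nabla f$-like vector field whose stable and unstable sets, being unions of half-thimbles fibering over the polar curves, sit inside $U$.

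The main obstacle is the middle step: confirming that this stratum-by-stratum construction assembles into a closed semi-algebraic set meeting each stratum in the right real dimension, and in particular that the higher-stratum thimbles limit onto $K \cap S_0$ near $p$ rather than escape to the boundary. This requires Whitney regularity to control the tangent-plane behavior of the polar curves as they approach $p$, combined with the subanalytic Curve Selection Lemma to ensure that the closure does not pick up extra positive-dimensional pieces inside the locus $\{f = 0\}$. This is precisely the ``fuzz'' phenomenon in Grinberg's terminology: one cannot pin down $K$ exactly, but one can make it arbitrarily close to the minimal candidate, which is enough for the neighborhood condition (3). Once the dimensional bounds are secured, the controlled vector field construction is routine.
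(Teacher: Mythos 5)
This statement is quoted by the paper from Grinberg \cite{Gr} as background; the paper gives no proof of it, so there is nothing internal to compare against and your proposal must be judged on its own. Judged that way, it has a genuine gap at its central step, the construction of $K$. On a stratum $R$ with $\dim_\C R=r$ you define $K\cap R=\{\mathrm{Im}\,F=0,\ \mathrm{Re}\,F\le 0\}\cap\mathcal N(\Gamma_{f,R})$ (writing $F$ for the complex linear form with $\mathrm{Re}\,F=f$). Since $\mathcal N(\Gamma_{f,R})$ is an open, hence real $2r$-dimensional, neighborhood of the polar curve, this set is a piece of a real hypersurface with boundary and has real dimension $2r-1$, not $\le r$; making the neighborhood ``thin'' never lowers its dimension. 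For the same reason condition (2) fails: $K\cap f^{-1}(0)\cap R\supset\{F=0\}\cap\mathcal N(\Gamma_{f,R})$, a $(2r-2)$-dimensional piece of the hyperplane section through $p$, because $\mathcal N(\Gamma_{f,R})$ contains a ball around $p$. Your justification (``the only point of $\Gamma_{f,R}$ with $\mathrm{Re}\,f=0$ is $p$'') controls only points lying on the polar curve, not points merely near it. The object you actually want is not a thickened slab around $\Gamma$ but something of dimension exactly $r$, such as the union over $t$ in the negative real axis of the $(r-1)$-dimensional vanishing polyhedra $P_t\subset F^{-1}(t)$ in the sense of L\^e --- which is precisely the set used in the present paper's proof of Theorem \ref{thm:main} --- or Grinberg's own semi-algebraic candidate; with your definition, conditions (1) and (2) are both false for $r\ge 2$.

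The third step is also not ``routine''. Producing an $\mathcal S$-preserving gradient-like field whose stable and unstable sets lie in a prescribed neighborhood $U$ of $K$ is the actual content of Grinberg's theorem, and the analogous difficulties occupy all of Section 6 of the present paper: one must show that the lifted field extends continuously over the collapsing set, that it remains gradient-like for $f$ near $f^{-1}(0)$ where the naturally lifted field degenerates, and that gluing by partitions of unity creates no new trajectories converging to or emanating from $p$ outside $U$. A field that ``pushes points rapidly onto $\mathcal N(\Gamma_{f,R})$ and then along the real $f$-direction'' is typically not gradient-like for $f$ during the pushing phase, and controlled lifts only constrain the $\Pi_S$- and $\rho_S$-components of the field, not where infinite-time limits of trajectories land; so condition (3), the heart of the statement, is not established by the argument you sketch. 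Both the definition of $K$ and the confinement of the stable and unstable sets need a genuinely different mechanism (Grinberg's moving construction, or the L\^e-polyhedron route taken in this paper).
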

In other words, the theorem proves that the stable and unstable sets of the covector $f$ can be made sufficiently close to a subset $K$ having the expected dimension as in the conjecture (Theorem \ref{thm:main}).

But the theorem does not provide the actual dimension of the stable and unstable sets. In other words, as Grinberg himself states, it solves the conjecture ``up to a fuzz''. However this result was enough to prove the existence of a self-indexing Morse function:
\begin{theorem}\cite{Gr}
Every proper, non-singular, Whitney stratified complex analytic variety admits a self-indexing Morse function.
\end{theorem}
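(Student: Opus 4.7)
The plan is to begin with an arbitrary Morse function $f_0$ on $X$, whose existence follows from standard results on Whitney stratified complex analytic varieties (for instance, Pignoni's construction). Each critical point $p$ of $f_0$ carries a Morse index $m(p) = m_S(p)+n-s$ as in Definition \ref{def:morind}, and the goal is to modify $f_0$ through a sequence of stratum-preserving perturbations so that the critical value at every $p$ equals $m(p)$.

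First I would apply the previous ``up to a fuzz'' theorem at each critical point $p$ to obtain, after assembling the local pieces, an $\cal S$-preserving gradient-like vector field $V$ for $f_0$ whose stable and unstable sets lie in arbitrarily small neighborhoods of semi-algebraic sets $K_p$ satisfying $\dim_\R K_p\cap S \leq \dim_\C S$ for every stratum $S$. Summing these bounds stratum by stratum gives the global estimates $\dim_\R W^u(p)\leq m(p)$ and, dually, $\dim_\R W^s(q)\leq 2n-m(q)$. Whenever $m(p)<m(q)$ the sum of these two dimensions is strictly less than $2n=\dim_\R X$, so after a generic $\cal S$-preserving perturbation of $V$ we can arrange $W^u(p)\cap W^s(q)=\emptyset$ simultaneously for all such pairs; in particular no broken gradient trajectory runs from a lower-index critical point into a higher-index one.

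With this absence of connecting trajectories in hand, I would adapt the classical Smale--Milnor rearrangement lemma to the stratified setting. For any two critical points $p,q$ without a connecting broken trajectory, one replaces $f_0$ by $f_0+\varphi(f_0)$ for a bump function $\varphi$ supported in an interval between the two critical values; the composition with $f_0$ ensures that level sets of the old function are preserved, so no new critical points appear and $V$ remains gradient-like away from a thin slab of level sets. Iterating pair by pair yields a Morse function satisfying $m(p)<m(q)\Longrightarrow f(p)<f(q)$, after which a final post-composition with a strictly increasing real function normalizes each critical value to $m(p)$, producing a self-indexing Morse function.

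The step I expect to be the main obstacle is the stratified rearrangement lemma itself: the bump function $\varphi$ must be chosen so that $f_0+\varphi(f_0)$ is still a Morse function in the sense of Definition \ref{def:mor}, preserving the non-degeneracy condition on every generalized tangent space, and so that the modified $V$ remains compatible with the control data across nested tubular neighborhoods. The dimension-count above guarantees that $\varphi$ with $1+\varphi'(f_0)>0$ along every trajectory in the relevant slab can accomplish the swap, but propagating this construction coherently through all the strata touched by the slab, while maintaining weak controllability, is the delicate bookkeeping that the proof must carry out.
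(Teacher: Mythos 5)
This statement is quoted from Grinberg [Gr]; the paper itself offers no proof, so your sketch can only be measured against Grinberg's own strategy, which does indeed run through his ``up to a fuzz'' theorem. As written, however, your outline has two genuine gaps.

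First, you deduce from the fuzz theorem the estimates $\dim_\R W^u(p)\leq m(p)$ and $\dim_\R W^s(q)\leq 2n-m(q)$ for the \emph{actual} unstable and stable sets. The fuzz theorem gives no such thing: it bounds the dimension of the semi-algebraic core $K$ and only guarantees that the stable and unstable sets can be squeezed into an arbitrarily small neighborhood $U\supset K$; a small neighborhood of a set of dimension $m(p)$ can still be of full dimension $2n$. Bounding $\dim_\R W^u(p)$ itself is exactly the conjecture that the present paper is devoted to proving, so this step silently assumes the main theorem. What the fuzz statement does support is a disjointness argument: put the cores $K_p$ and $K_q$ (for $-f$) in general position, so that $\dim_\R K_p+\dim_\R K_q<2n$ forces $K_p\cap K_q=\emptyset$, and then choose the neighborhoods $U$ small enough to be disjoint, concluding $W^u(p)\cap W^s(q)=\emptyset$ because disjointness, unlike a dimension count, is an open condition. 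You would also need to address the passage from Grinberg's local statement (linear covectors on a ball in $\C^n$) to a global gradient-like field for a given $f_0$.

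Second, your rearrangement mechanism cannot work as written. Replacing $f_0$ by $f_0+\varphi(f_0)=\psi\circ f_0$ with $\psi=\mathrm{id}+\varphi$ is a modification by a function of $f_0$ alone: if $1+\varphi'>0$ everywhere, $\psi$ is strictly increasing and the order of the critical values is unchanged, so no swap ever occurs; if $1+\varphi'$ vanishes somewhere, whole level sets become critical. The classical Smale--Milnor rearrangement is not a reparametrization of values; it replaces $f_0$ by a function of the form $G(f_0(x),\mu(x))$, where $\mu$ is constant along trajectories and separates $W^u(p)\cup W^s(p)$ from $W^u(q)\cup W^s(q)$ inside the slab --- this is precisely where the disjointness of the invariant sets enters. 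The stratified version of that two-variable construction (checking that the result is still Morse in the sense of Definition \ref{def:mor} and compatible with the control data) is the real content of the step, and your sketch does not supply it.
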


\section{The polar curve and the Cerf diagram}
For the rest of this paper, we denote by $X$ a purely $n$-dimensional reduced complex analytic subvariety of some complex analytic manifold $M$ (unless specified otherwise), on which we fix a Whitney stratification $\mathcal S$ (whose existence was proved by Whitney himself in \cite{W}).
Let $f^c:X\rightarrow\C$ be a complex analytic function and suppose $f:=Re(f^c)$  is a Morse function and $p$ a critical point of $f$ and also assume that $f(p) =0$ for simplicity. We now recall the notion of polar curve, which provides an essential and convenient tool to carry out inductive arguments for complex analytic varieties.

The following result, proved in different generality, by Hamm and L\^e in \cite{LH} and by L\^e
in \cite{L0} (see also chapter 7 of the book \cite{Mih}, \cite{Ma} or \cite{Ma2} Theorem 1.1):
\begin{theorem}\label{h1}
There exists an open dense Zariski subset $\Omega$ in the space of hyperplanes through $p$ such that for every $H\in\Omega$ we have that:\\
(1) there is an open neighborhood $U\subset X$ such that $H$ is transversal in $U$ to every stratum $S_i$ of $\mathcal S$ with $p\in\overline{S_i}$ except maybe $\{p\}$ if it is a stratum;\\
(2) chosen a linear form $l:X\rightarrow\C$ defining $H$, consider the map
$$\phi:X\rightarrow\C^2$$
$$\phi=(l,f^c)$$ 
then  for any stratum $S_i$ with $p\in\overline{S_i}$ the set 
$$\Gamma_{\overline{S_i}}:=\overline{(C_i\setminus\{Crit(f^c)\})}\cap U$$ 
where $C_i$ is the set of critical points of the restriction of $\phi$ to the smooth part of $\overline{S_i}$, is either empty or a reduced curve. Furthermore, $\Gamma_{\overline{S_i}}$ properly intersects
$(f^c)^{-1}(0)$ at $p$, i.e. $p$ is an isolated point in $\Gamma_{\overline{S_i}} \cap (f^c)^{-1}(0)$.
\end{theorem}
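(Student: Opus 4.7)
The plan is to realize $\Omega$ as a finite intersection of Zariski open dense subsets of $\mathbb{P}((T_pM)^*)$, one per condition in (1), (2) and per stratum $S_i$ with $p\in\overline{S_i}$. Local finiteness of $\mathcal S$ ensures that only finitely many such strata meet a small neighborhood of $p$, so it is enough to produce each individual open set separately and take their intersection; shrinking $U$ after the fact is harmless.

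For part (1), the key input is Whitney's condition (a). For each $S_i$ with $p\in\overline{S_i}\setminus\{p\}$, form the closed algebraic subvariety $\Lambda_i\subset\mathrm{Gr}(s_i,T_pM)$ consisting of all limits $\lim T_{x_n}S_i$ for sequences $x_n\to p$ in $S_i$; these are the generalized tangent planes at $p$ attached to $S_i$. Hyperplanes transversal to every plane of $\Lambda_i$ form a Zariski open dense subset of $\mathbb{P}((T_pM)^*)$ by a standard dimension count on the Grassmannian. For such an $H$, Whitney (a) upgrades transversality at $p$ to transversality on a neighborhood of $p$ in $S_i$: otherwise a sequence of non-transverse points in $S_i$ converging to $p$ would produce, by (a), a limit plane in $\Lambda_i$ to which $H$ is not transversal, contradicting the choice.

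For part (2), I would analyze $\phi=(l,f^c)$ on the smooth part of $\overline{S_i}$ stratum by stratum for $s_i\geq 2$. The critical locus $C_i$ is the rank-drop locus where $dl$ and $df^c$ are linearly dependent on $T_xS_i$. Consider the universal incidence variety
$$\tilde C_i=\bigl\{(x,[l])\in S_i\times\mathbb{P}((T_pM)^*)\bigm| dl|_{T_xS_i}\text{ is proportional to }df^c|_{T_xS_i}\bigr\}.$$
Over the open set where $df^c|_{T_xS_i}\neq 0$, the fiber of the first projection is a projective linear space of codimension $s_i-1$, so $\dim\tilde C_i=n$; projection to $\mathbb{P}((T_pM)^*)$ then has generic fiber of complex dimension $1$, which is the curve property for $C_i$. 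Generic reducedness of $C_i$ follows from a Bertini-type statement on the universal family: the non-reduced locus is a proper Zariski closed subset of the $[l]$-space. Finally, the properness of $\Gamma_{\overline{S_i}}\cap (f^c)^{-1}(0)$ at $p$ is automatic, because by construction $df^c\neq 0$ on $C_i\setminus\mathrm{Crit}(f^c)$, so $f^c$ is non-constant on every branch of $\Gamma_{\overline{S_i}}$, and hence no branch can lie inside the fiber $(f^c)^{-1}(0)$.

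The main obstacle is extending the dimension count and the Bertini argument across the singular part $\overline{S_i}\setminus S_i$, since $\Gamma_{\overline{S_i}}$ is defined by closure and the branches of the polar curve may approach lower strata. Following Hamm and L\^e, this is handled by passing to a resolution or Nash modification of $\overline{S_i}$, on which the critical-locus construction becomes a genuine generic hyperplane section and classical Bertini applies; finiteness of the resolution map over a small neighborhood of $p$ then guarantees that the curve, reducedness and proper-intersection properties descend to $\overline{S_i}$ itself, producing the desired Zariski open subset $\Omega_i^{(2)}$. Intersecting these with the $\Omega_i^{(1)}$ from part (1) over the finitely many relevant strata yields $\Omega$.
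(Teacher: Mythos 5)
The paper itself offers no proof of Theorem \ref{h1}: it is quoted from Hamm--L\^e \cite{LH} and L\^e \cite{L0} (see also \cite{Mih}, \cite{Ma2}), so your argument has to be judged against that literature rather than against an in-paper proof. Your part (1) (transversality to all generalized tangent planes at $p$ is a Zariski open dense condition, propagated to a neighborhood by a limiting argument on the Grassmannian) and your incidence-variety count for the curve property are indeed the standard route; note only that $\dim\tilde C_i=\dim_{\C}M$ (one more than $\dim\mathbb{P}((T_pM)^*)$), not $n$, that what makes the bad locus in (1) proper is the bound $\dim\Lambda_i\le s_i-1$ on the fiber of the Nash modification (Whitney (a) is not really what is used), and that reducedness for generic $l$ is asserted via an unproved ``Bertini on the universal family'' and is genuinely part of what \cite{LH}, \cite{L0} prove.

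The genuine gap is the last claim, which you declare ``automatic'': from $df^c\neq 0$ on $C_i\setminus \mathrm{Crit}(f^c)$ you conclude that $f^c$ is non-constant on every branch of $\Gamma_{\overline{S_i}}$. This inference is false: non-vanishing of $df^c$ on the stratum does not prevent $f^c$ from being constant along a particular curve inside the stratum. Concretely, take $X=M=\C^2$, $f^c=xy$, $l=x$: the rank-drop locus is $C=\{x=0\}$, $\mathrm{Crit}(f^c)=\{0\}$, and along $\{x=0\}\setminus\{0\}$ one has $df^c=y\,dx\neq 0$, yet $f^c\equiv 0$ there, so the whole polar branch lies in $(f^c)^{-1}(0)$. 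Thus the proper-intersection statement is exactly a genericity assertion about $l$ --- in Hamm--L\^e/L\^e/Teissier it is a key lemma (e.g.\ phrased as: a generic $H$ is not a limit of tangent hyperplanes to the fibers of $f^c$) --- and it needs its own argument, not a formal deduction. Within your framework the fix is another dimension count: restrict the incidence variety to base $S_i\cap (f^c)^{-1}(0)\setminus \mathrm{Crit}(f^c)$, which has dimension $s_i-1$, while the fiber of the proportionality condition still has dimension $\dim\mathbb{P}((T_pM)^*)-(s_i-1)$; the total is $\dim\mathbb{P}((T_pM)^*)$, so for $[l]$ in a further Zariski open dense subset the set $C_i\cap (f^c)^{-1}(0)\setminus\mathrm{Crit}(f^c)$ is at most $0$-dimensional near $p$. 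Hence no $1$-dimensional component of $\Gamma_{\overline{S_i}}$ can be contained in $(f^c)^{-1}(0)$, so $\Gamma_{\overline{S_i}}\cap (f^c)^{-1}(0)$ is a proper analytic subset of the curve, i.e.\ $0$-dimensional, and $p$ is isolated in it. These extra open conditions (one per stratum) must be intersected into your $\Omega$.
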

This theorem leads to the defintions of so called polar curve and Cerf diagram:
\begin{definition} 
We fix $H \in \Omega$ given in the Theorem \ref{h1}.
We call 
$$\Gamma=\cup_i(\Gamma_{\overline{S_i}}\cap U)$$
the polar curve of $f^c$ at $p$ relatively to the stratification $\mathcal S$. If $\Gamma$ is non-empty, for a suitable choice of $U$, $\phi(\Gamma)=\Delta = \cup_{\alpha \in A} \Delta_\alpha$ is an analytic curve in an open set of $\C^2$, which is called the Cerf's diagram of $\phi$ at $p$ relatively to the stratification $\mathcal S$.
\end{definition}
Now we discuss the relation between a Morse function and hyperplane sections.
\begin{lemma}\label{lem:restmor}
Let $f:X\to \R$ be a Morse function on a complex analytic subspace $X$ of some complex analytic manifold $M$ as above.
Then, there exists a dense subset $\Omega'$ in the space of hyperplanes through $p$ such that, 
for all $H\in \Omega'$, the function $f:X\cap H \to \R$ is also a Morse function.
\end{lemma}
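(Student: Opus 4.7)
The plan is to produce $\Omega'$ as the intersection of the open dense set $\Omega$ from Theorem~\ref{h1} with several further generic conditions on the hyperplane, each obtained by a standard parametric transversality argument.

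I would start with $\Omega$ from Theorem~\ref{h1}: for $H\in\Omega$, $H$ is transverse to every stratum $S_i$ with $p\in\overline{S_i}$ (except possibly $\{p\}$) in a neighbourhood of $p$. Since transverse intersection preserves Whitney regularity, the collection $\{S_i\cap H\}$ forms a Whitney stratification of $X\cap H$. Properness of $f|_{X\cap H}$ is inherited from that of $f$ because $X\cap H$ is closed in $X$, and distinctness of critical values is a codimension-one condition to be imposed at the end.

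The core step is checking non-degeneracy (condition (b) of Definition~\ref{def:mor}). Fix a stratum $S$ and a complex linear form $l$ defining $H$. A point $q\in S\cap H$ is critical for $f|_{S\cap H}$ precisely when the real covector $df(q)|_{T_qS}$ lies in the two-dimensional real subspace of $(T_qS)^*$ spanned by $d(Re\,l)|_{T_qS}$ and $d(Im\,l)|_{T_qS}$. I would form the incidence variety inside $S\times\Omega$ cut out by this condition and apply a Sard-type parametric transversality argument in $l$ to conclude that for $l$ in a dense open subset $\Omega_S^{(b)}\subset\Omega$, the critical set of $f|_{S\cap H}$ consists of finitely many non-degenerate points. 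For condition (c), I would observe that the generalized tangent spaces of the induced stratification at a critical point $q$ are exactly $Q\cap T_qH$ as $Q$ ranges over generalized tangent spaces of $\mathcal S$ at $q$; using $df(q)(Q)\neq 0$ from the original Morse hypothesis and a finiteness/genericity argument in $l$, one obtains a dense open $\Omega_S^{(c)}$. Intersecting all these (finitely many) dense open sets with the distinctness condition yields the desired $\Omega'$.

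The main obstacle is carrying out the parametric transversality of step (b) rigorously, in particular controlling the behaviour of critical points as they approach the lower-dimensional stratum $\{p\}$ so that the critical set remains finite and non-degeneracy does not degenerate in the limit. This is handled by working in a precompact neighbourhood of $p$ where the transversality already built into the definition of $\Omega$ keeps the incidence variety well-behaved near the boundary strata.
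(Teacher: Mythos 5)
Your overall strategy (slice the stratification by a transversal $H$ and impose finitely many genericity conditions, one per stratum and per condition of Definition \ref{def:mor}) is reasonable in outline, but the step you yourself single out as the main obstacle is precisely where the argument has a genuine gap, and the fix you propose does not close it. Membership of $H$ in $\Omega$ only guarantees that $H$ is transverse to the strata $S_i$ near $p$; it gives no control whatsoever over the covector $df$ along $S\cap H$, hence none over the critical set of $f|_{S\cap H}$: transversality of $H$ to the strata does not prevent critical points of the restriction from accumulating at $p$ or degenerating there, so it does not keep your incidence variety ``well-behaved near the boundary strata'', and a Sard-type argument in the constrained family of hyperplanes through the fixed point $p$ does not by itself exclude this degeneration at $p$. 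What actually controls it is the complex-analytic input of Theorem \ref{h1}(2), which your proposal never uses: since $f=Re(f^c)$ with $f^c$ holomorphic, a point $q\in S\cap H$ with $q\neq p$ and $dl|_{T_qS}\neq 0$ is critical for $f|_{S\cap H}$ if and only if $df^c$ and $dl$ are proportional on $T_qS$, i.e. $q$ lies on the polar curve $\Gamma$ (or $q$ is already a critical point of $f|_S$). For $l\in\Omega$ no component of $\Gamma$ is contained in $H=\{l=0\}$ (cf.\ the remark in section 5.3 that $\{0\}\times\C$ is not a component of the Cerf diagram), so $\Gamma\cap H$ is locally just $\{p\}$; hence near $p$ the restriction $f|_{X\cap H}$ has no critical point other than $p$ itself, conditions (a) and (b) of Definition \ref{def:mor} are automatic there, and the entire content of the lemma is condition (c) at $p$. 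Without this reduction, the finiteness and non-degeneracy claims in your step (b) are unsupported.

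A second, smaller flaw: for condition (c) you invoke ``$df(q)(Q)\neq 0$ from the original Morse hypothesis'' at a critical point $q$ of $f|_{S\cap H}$; but such a $q$ is in general a new critical point created by the slicing, not a critical point of $f|_S$, so Definition \ref{def:mor}(c) for $f$ on $X$ asserts nothing at $q$ (again, this becomes moot once one knows the only nearby critical point is $p$). By contrast, the paper's proof is purely pointwise at $p$ and argues by contradiction: if condition (c) for $f|_{X\cap H}$ failed at $p$ for all $H$ in some open set of hyperplanes, one would obtain a dense family of nonzero vectors $v_H\in Q\cap H$ annihilated by $df(p)$, and by linearity and continuity $df(p)$ would vanish on a generalized tangent plane $Q$, contradicting the Morse condition for $f$ on $X$; no parametric transversality or incidence-variety construction is needed.
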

\begin{proof}
Suppose not. Then it is easy to see that condition $(c)$ in Definition \ref{def:mor} fails to hold for $f:X\cap H \to \R$ for a
generic choice of $H$. First, suppose that the union $U$ of all generalized tangent spaces at $p$ is a subspace
of a complex subspace $V \subset T_pM$, where $dim_\C (V) < dim_\C(T_pM)$. Then, for any generalized tangent
plane $Q$, $Q\cap H = \{0\}$ for generic $H$, hence condition $(c)$ trivially holds.
Hence, we may suppose that $U  = T_pM$ as $U$ is a complex vector space.

But the failure of condition $(c)$ implies the existence of a nonzero vector $v_H \in Q\cap H$ such that
$df(p)(v_H)=0$. As we can choose a dense family of hyperplanes $H$, we can find a dense family of vectors $v_H$ with
such property. As $df(p)$ is a linear function, this implies the vanishing of $df(p)$ on $Q$, providing a contradiction.
\end{proof}
In this paper, we will choose hyperplanes in $\Omega'\cap \Omega$.
Polar curve has provided very useful tool to study singular complex analytic spaces. 

The famous fibration theorem of Milnor has been generalized by L\^e in \cite{L02}. It is useful to consider neighborhoods
which are not balls but polydiscs, as done by L\^e in the following topological preparation theorem.

\begin{theorem}\cite{L2}
Let $l \in \Omega$. For simplicity, we suppose $l(p) =f^c(p) =0$. For all $\epsilon$,
$1 \gg \epsilon >0$ and for all $\eta_1,\eta_2$, $\epsilon \gg \eta_1>0$, $\eta_1 \gg \eta_2>0$,
the morphism $\phi$ induces a mapping of $X_{\epsilon,\eta_1,\eta_2} =
B_\epsilon(p) \cap \phi^{-1}(D_{\eta_1} \times D_{\eta_2})\cap X$ on $D_{\eta_1} \times D_{\eta_2}$
which induces an topological fibration of $X_{\epsilon,\eta_1,\eta_2} \setminus \phi^{-1}(\cup_{\alpha \in A} \Delta_{\alpha})$ on $D_{\eta_1} \times D_{\eta_2} \setminus (\cup_{\alpha \in A} \Delta_{\alpha})$, moreover
the projection onto $D_{\eta_2}$ composed with $\phi$ induces a map on $X_{\epsilon,\eta_1,\eta_2}$ onto $D_{\eta_2}$
which induces a topological fibration above $D^*_{\eta_2}$ isomorphic to that  of local fibration theorem.
\end{theorem}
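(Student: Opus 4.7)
The plan is to deduce this as an application of the Thom–Mather first isotopy lemma, after exhibiting $\phi: X_{\epsilon,\eta_1,\eta_2}\to D_{\eta_1}\times D_{\eta_2}$ as a proper stratified submersion off of $\phi^{-1}(\cup_\alpha \Delta_\alpha)$. The ingredients that will make this work are (i) the curve-selection/Whitney-regularity fact that $\partial B_\epsilon(p)$ is transverse to every stratum $S_i$ containing $p$ in its closure for all sufficiently small $\epsilon$, (ii) the condition $l\in\Omega$, which guarantees that $l$ (hence every level $l^{-1}(c)$ with $|c|$ small) is transverse to the strata in a fixed neighborhood $U$ of $p$, (iii) the definition of the polar curve $\Gamma$, which says that away from $\phi^{-1}(\Delta)$ the map $\phi|_{S_i}$ is a submersion onto $\C^2$ on the smooth parts of the closures $\overline{S_i}$, and (iv) the fact that $p$ is isolated in $\Gamma_{\overline{S_i}}\cap (f^c)^{-1}(0)$, so that after choosing $\eta_1,\eta_2$ small enough the Cerf diagram $\Delta$ meets $\partial(D_{\eta_1}\times D_{\eta_2})$ only in a controlled way.

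The first step is to choose the constants in the right order. I would fix $\epsilon\ll 1$ small enough that $\partial B_\epsilon(p)\pitchfork S_i$ for all strata $S_i$ with $p\in\overline{S_i}$ and so that $B_{\epsilon}(p)\subset U$. Next I would pick $\eta_1\ll \epsilon$ small enough that, for all $c\in D_{\eta_1}$, the hyperplane $l^{-1}(c)$ remains transverse to the strata and to $\partial B_\epsilon(p)\cap S_i$ (this is an open transversality condition, hence inherited from $\eta_1=0$). Finally I would pick $\eta_2\ll \eta_1$ small enough that no branch of the Cerf diagram $\Delta_\alpha$ meets the corner $\partial D_{\eta_1}\times D_{\eta_2}$ and so that the fiber of $f^c$ over $D_{\eta_2}$ remains inside $B_\epsilon(p)\cap l^{-1}(D_{\eta_1})$; this is possible because $p$ is isolated in $\Gamma\cap (f^c)^{-1}(0)$ and because of the standard Milnor-type conic structure of $X\cap (f^c)^{-1}(0)$ near $p$.

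With these choices, I would equip $X_{\epsilon,\eta_1,\eta_2}$ with the induced Whitney stratification (including the new strata coming from intersection with $\partial B_\epsilon$ and the boundary of the polydisc). Off of $\phi^{-1}(\Delta)$, $\phi$ restricted to each stratum is a submersion onto $D_{\eta_1}\times D_{\eta_2}$, by the very definition of the polar curve, and it remains a submersion on the boundary strata by the transversality arranged in the previous paragraph. The map $\phi$ is proper onto $D_{\eta_1}\times D_{\eta_2}$ because $B_\epsilon(p)\cap X$ is compact. One can then lift, via the Thom–Mather isotopy lemma applied to the open subset $D_{\eta_1}\times D_{\eta_2}\setminus\Delta$, any smooth vector field on the base to a controlled stratified vector field upstairs, producing a local trivialization of the fibration $\phi: X_{\epsilon,\eta_1,\eta_2}\setminus\phi^{-1}(\Delta)\to (D_{\eta_1}\times D_{\eta_2})\setminus \Delta$.

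For the second assertion, I would observe that $\pi_2\circ\phi = f^c$, so the map $f^c: X_{\epsilon,\eta_1,\eta_2}\to D_{\eta_2}$ is a proper stratified map whose fiber over $t\in D^*_{\eta_2}$ is $(f^c)^{-1}(t)\cap B_\epsilon(p)\cap l^{-1}(D_{\eta_1})$. To identify this with the usual Milnor fibration of $f^c$, one again invokes the isotopy lemma: over $D^*_{\eta_2}$ the stratified map $f^c$ is submersive on every stratum (the only tangential critical values of $f^c$ are $0$, by the Morse/stratified-Morse hypothesis and the genericity of $l$), and the choice $\eta_2\ll \eta_1\ll \epsilon$ guarantees that the Milnor fiber representative $(f^c)^{-1}(t)\cap B_\epsilon(p)$ is contained in $l^{-1}(D_{\eta_1})$, so restricting to the polydisc changes nothing up to diffeomorphism. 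The hardest bookkeeping step is arranging all of the transversalities on the corners of the polydisc simultaneously — this is the step where one really needs the nested chain $\eta_2\ll\eta_1\ll\epsilon\ll 1$ and the isolatedness of $p$ in $\Gamma\cap(f^c)^{-1}(0)$, rather than any deeper input.
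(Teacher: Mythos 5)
This statement is not proved in the paper at all: it is quoted from L\^e's work (\cite{L2}, relying on \cite{L02}), so there is no internal proof to compare against; I can only assess your argument on its own terms. Your outline for the first assertion is the standard one and essentially sound: order the constants $\eta_2\ll\eta_1\ll\epsilon\ll 1$, use Whitney regularity to get $\partial B_\epsilon(p)$ transverse to the strata, use $l\in\Omega$ and the definition of the polar curve to get that $\phi$ is submersive on every (interior) stratum off $\phi^{-1}(\Delta)$, use isolatedness of $p$ in $\Gamma\cap (f^c)^{-1}(0)$ to keep $\Delta$ away from $\partial D_{\eta_1}\times D_{\eta_2}$, and conclude by properness and Thom--Mather. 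One caveat: submersivity of $\phi$ on the boundary strata is not quite the openness statement you invoke; what is needed is that the fibers $\phi^{-1}(u,v)=l^{-1}(u)\cap(f^c)^{-1}(v)$ remain (stratified) transverse to $\partial B_\epsilon(p)$ for all $(u,v)\in D_{\eta_1}\times D_{\eta_2}$, which follows from transversality at $(u,v)=(0,0)$ (conic structure/curve selection) plus openness of transversality, and should be stated as such.

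The genuine gap is in your proof of the ``moreover'' part. You claim that for $\eta_2\ll\eta_1\ll\epsilon$ the Milnor fibre representative $(f^c)^{-1}(t)\cap B_\epsilon(p)$ is contained in $l^{-1}(D_{\eta_1})$, so that intersecting with the polydisc ``changes nothing''. This containment is false precisely because of the ordering of the constants: $\eta_1$ is chosen much smaller than $\epsilon$, so points of $(f^c)^{-1}(t)\cap B_\epsilon(p)$ generically have $|l|$ of order $\epsilon\gg\eta_1$ (already visible for $X=\C^2$, $f^c=z_2$, $l=z_1$). Hence $X_{\epsilon,\eta_1,\eta_2}\cap (f^c)^{-1}(t)$ is a proper subset of the ball fibre, and the isomorphism of the polydisc fibration over $D^*_{\eta_2}$ with the local fibration is not a tautology: it requires an actual comparison argument, e.g.\ constructing (away from the polar curve, whose image $\Delta$ has been pushed inside $D_{\eta_1}\times D_{\eta_2}$) a vector field decreasing $|l|$ and preserving the fibres of $f^c$, whose flow retracts $B_\epsilon(p)\cap(f^c)^{-1}(t)$ onto the polydisc fibre compatibly with the fibrations; this is the content of L\^e's argument in \cite{L02} and is the step your proposal is missing.
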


We also recall that a complex analytic function $g^c:X\subset M\rightarrow\C$ has an isolated singularity at $x$ if for all complex analytic extension
 $\tilde{g^c}$ of $g$ in an open neighborhood $U$ of $x$ in $M$,  the intersection of the image of the section $d\tilde{g^c}$ and
the conormal space $\cup_{i} T_{\overline{S_i}}^*U$ in $(x,d\tilde{g^c}(x))$ is an isolated point. The critical
points of a Morse function are isolated singularites and we refer to \cite{L2} or \cite{L3} for more details.

\section{L\^e's inductive construction of vanishing polyhedron}
\label{leconstr}
For the main construction of this paper, we recall briefly parts of L\^e's constructions in \cite{L1} and \cite{L2}. The construction in this section is entirely due to L\^e and we refer readers to \cite{L2} for its full details.

Consider the map $\phi:X \to \C^2$ from Theorem \ref{h1}. L\^e has shown how to define a polyhedron 
which contains essential information of each fiber $f^{-1}(t)$ (see figure \ref{fig1}), by working explicitly with polar curves.

Fix $\epsilon, \eta$ so that the fibration theorem of \cite{L02} holds  and that $f^c$ has rank one in such a neighborhood. We denote (for $t \in D_\eta \setminus \{0\}$)
$$X_{\epsilon,\eta} = B_\epsilon \cap (f^c)^{-1}(D_\eta), \;\; X_{\epsilon,\eta}(t) = B_\epsilon \cap (f^c)^{-1}(t),$$
  $$\partial X_{\epsilon,\eta} = \big( \partial B_\epsilon \cap (f^c)^{-1}(D_\eta) \big) \cup \big( B_\epsilon \cap (f^c)^{-1}(S^1_\eta) \big),\;\; \partial X_{\epsilon,\eta}(t) = \partial B_\epsilon \cap (f^c)^{-1}(t).$$

\begin{theorem}\cite{L2}
Let $f^c:X\rightarrow\C$ be a complex analytic function on a purely $n$-dimensional reduced complex analytic variety. Suppose $X$ is closed in an open subset of $\C^N$ for some $N$ and endowed with a Whitney stratification ${\cal S}$. Suppose $f^c$ has an isolated singularity at $p\in X$ and for simplicity that $f^c(p)=0$. 

Then for $\epsilon$ and $\eta$ such that $1\gg\epsilon\gg\eta>0$ and for all $t\in  D_\eta \setminus \{0\}$, there exists in $X_{\epsilon,\eta}(t)$ a polyhedron $P_t$ of real dimension $n-1$ compatible with ${\cal S}$ (that is, the interior of each simplex is contained in some stratum of ${\cal S}$) and a continuous stratified simplicial map $\psi_t:\partial X_{\epsilon,\eta}(t)\rightarrow P_t$ such that $X_{\epsilon,\eta}(t)$ is the mapping cylinder of $\psi_t$. 

Moreover, there exists also a continuous simplicial map $r_t:X_{\epsilon,\eta}(t)\rightarrow X_{\epsilon,\eta}(0)$ sending $P_t$ onto $p$ and inducing a stratified homeomorphism from $X_{\epsilon,\eta}(t)\setminus P_t$ onto $X_{\epsilon,\eta}(0)\setminus\{p\}$.
\end{theorem}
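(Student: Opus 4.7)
The plan is to prove the statement by induction on the complex dimension $n$ of $X$, using the polar curve and Cerf diagram from the preceding section as the key geometric input. In the base case $n=1$, $X$ is a (possibly singular) complex curve, the generic fiber $X_{\epsilon,\eta}(t)$ is a finite set of points in each connected component of $X_{\epsilon,\eta}\setminus\{p\}$, and $P_t$ is taken to be the union of those centerpoints; the map $\psi_t$ on the zero-dimensional boundary is the obvious one, and $r_t$ simply collapses $P_t$ to $p$.

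For the inductive step I fix a generic hyperplane $H=l^{-1}(0)\in\Omega$ as in Theorem \ref{h1} so that the Cerf diagram $\Delta=\phi(\Gamma)$ is a one-dimensional analytic set in a neighborhood of $0\in\C^2$. For a chosen regular value $t\in D_\eta\setminus\{0\}$ of $f^c$, I consider the restriction $l:X_{\epsilon,\eta}(t)\to D_{\eta_1}$ and analyze its fibers. By the choice of $H$, over each $s\in D_{\eta_1}$ with $(s,t)\notin\Delta$ the slice $X\cap l^{-1}(s)\cap (f^c)^{-1}(t)$ inherits a Whitney stratification, satisfies the isolated-singularity hypothesis after a small perturbation, and has complex dimension $n-1$, so the inductive hypothesis produces a vanishing polyhedron $P_{t,s}$ of real dimension $n-2$ with a stratified mapping-cylinder structure. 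The Thom--Mather first isotopy lemma, applied to the proper stratified submersion $l$ over $D_{\eta_1}\setminus\{(s,t)\mid (s,t)\in\Delta\}$, allows me to transport these polyhedra coherently and glue them into a closed subset $P_t\subset X_{\epsilon,\eta}(t)$ of real dimension $n-1$; the finitely many branches of $\Delta$ contribute the extra dimension exactly at the values of $s$ where the polar curve meets the fiber, and each such branch adds a cone whose apex encodes the vanishing cycle attached there. Compatibility with $\mathcal S$ is checked stratum by stratum using transversality of $H$.

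Once $P_t$ is built, the map $\psi_t:\partial X_{\epsilon,\eta}(t)\to P_t$ is obtained by first choosing a controlled vector field on $X_{\epsilon,\eta}(t)\setminus P_t$ which on each fiber of $l$ restricts to the retraction coming from the inductive mapping-cylinder structure, and then integrating. Weak controllability (Definition \ref{control}) together with Verdier's theorem guarantees that the resulting flow is stratum-preserving and continuous, and the mapping-cylinder presentation of $X_{\epsilon,\eta}(t)$ follows because trajectories hit $\partial X_{\epsilon,\eta}(t)$ in finite positive time and accumulate on $P_t$ in finite negative time. After refining the simplicial structure on the base $D_{\eta_1}$ compatibly with $\Delta\cap(\C\times\{t\})$ and on the polyhedra in the fibers, the resulting $\psi_t$ is simplicial.

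For the map $r_t:X_{\epsilon,\eta}(t)\to X_{\epsilon,\eta}(0)$ I use the fibration statement of Lê's preparation theorem already cited: over $D_\eta^\ast$ the map $f^c$ is a stratified locally trivial fibration, so lifting the straight-line path from $t$ to $0$ via a controlled horizontal vector field produces a stratified homeomorphism $X_{\epsilon,\eta}(t)\setminus P_t\to X_{\epsilon,\eta}(0)\setminus\{p\}$; extending by sending $P_t\to\{p\}$ yields the continuous $r_t$, and simpliciality follows by refining triangulations to be compatible with the lift. The main obstacle I expect is not any single step but the coherent bookkeeping: ensuring that the fiberwise polyhedra $P_{t,s}$ vary continuously as $s$ crosses $D_{\eta_1}$, that they glue across the Cerf locus $\Delta\cap(\C\times\{t\})$ to give a closed subcomplex of the right real dimension, and that one single control system can be chosen so that both the retraction producing $\psi_t$ and the monodromy producing $r_t$ are simultaneously controlled. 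This is exactly the delicate part of Lê's argument in \cite{L2}, and I would invoke his construction verbatim for these compatibility issues rather than redo them.
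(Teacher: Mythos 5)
Your inductive scheme is not the one that makes this theorem work, and as stated it breaks down at the dimension count and at the point where you invoke the induction hypothesis. You propose to apply the hypothesis to the fibers $X\cap l^{-1}(s)\cap (f^c)^{-1}(t)$ for every $s\in D_{\eta_1}$ with $(s,t)\notin\Delta$; but over such non-critical $s$ the map $l$ is a stratified submersion, there is no isolated singularity to which the hypothesis applies (the phrase ``after a small perturbation'' does not produce one), and there is no canonical vanishing polyhedron inside a generic smooth fiber --- the hypothesis attaches a polyhedron to a function with an isolated singular point, not to an arbitrary fiber. Moreover these slices have complex dimension $n-2$, not $n-1$, and sweeping $(n-2)$-real-dimensional polyhedra over the two-real-dimensional disc of values $s$ would produce a set of real dimension $n$, not $n-1$. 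The actual construction concentrates everything over a one-dimensional spider $Q_t=\bigcup_i\delta(y_i(t))\subset D_t$ joining a base point $\lambda(t)$ to the finitely many Cerf points $y_i(t)$, and uses the induction hypothesis at exactly two places: once for $f^c$ restricted to the hyperplane section $X\cap\{l=0\}$ (giving a polyhedron $P_t'$ of real dimension $n-2$ in $X_t\cap\{l=0\}$, together with its contracting field $E_t'$), and once for $\phi_t=l|_{X_t}$ near each polar point $x_i(t)$, which is an isolated singularity of $\phi_t$ (giving local vanishing polyhedra $P_i(a_i)$ over points of small circles around $y_i(t)$ and their collapsing cones along $\delta(y_i(t))$). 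These pieces are then transported along the paths by a weakly Lipschitz field of the form $W'=W+h\Xi$ and glued into $P_t'$; the resulting $P_t$ has dimension $n-1$ because the $(n-2)$-dimensional fiberwise pieces are spread only over the one-dimensional $Q_t$. Your gluing ``over the Cerf locus'' cannot substitute for this, and since your $P_t$ is assembled differently from L\^e's, you cannot invoke his compatibility arguments ``verbatim'' to close the gap.

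The construction of $r_t$ has a second, related gap. The fibration of $f^c$ is locally trivial only over $D_\eta^*$; the straight-line path from $t$ to $0$ ends at the critical value, where local triviality fails, so a controlled horizontal lift does not by itself yield a homeomorphism onto $X_{\epsilon,\eta}(0)\setminus\{p\}$, nor a continuous extension sending $P_t$ to $p$. The whole point of the collapsing-cone construction is to choose the lift of the base vector field on $\gamma$ so that it is tangent to the level sets $\phi^{-1}(\partial V_{A'}(Q))$ of a shrinking system of neighborhoods of $P$; only with this constraint does the flow on the complement of $P$ stay away from $P$, converge into $X_{\epsilon,\eta}(0)\setminus\{p\}$, and extend continuously over $P_t$ by collapsing it to $p$. (Indeed, the delicacy of extending such lifted fields across $P$ is precisely the issue the present paper has to revisit later with the sharp neighborhoods $V_{sharp}(P)$.) So the two hard points --- the correct two-fold use of the induction hypothesis over the spider $Q_t$, and the controlled-tangency choice of lift that makes the collapse to $p$ continuous --- are missing from your outline rather than merely deferred.
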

\begin{remark}
We point out a few differences between L\^e's theorem and the construction of gradient-like vector fields in this paper.
The flow $\Psi_t$ over a line in $D_{\eta_2}$ in \cite{L2} is constructed away from
the collapsing polyhedron  using a gradient-like vector field . We show how to extend such a flow to  that of vanishing polyhedron along a line in the next section. 
L\^e also discussed an extension of the theorem on a semidisc but with flows converging to the singular point $p$. 
This increases the dimension of the unstable set and is not appropriate for our purpose. Moreover, on $(f^{c})^{-1}(0)$, the vector field is not defined. In fact, a possibly natural extension may provides identically vanishing vector field on $(f^{c})^{-1}(0)$, which is not a gradient-like vector field. Later, we show how to overcome these difficulties using the additional Morse condition while keeping the unstable set as the collapsing polyhedron 
as in L\^e's theorem.
\end{remark}

The construction in \cite{L2} is given by induction on the complex dimension of $X$ with the following hypothesis:
\begin{hypothesis}\label{ind}
For all $t\in D_\eta\setminus\{0\}$, there exist in $X_{\epsilon,\eta}(t)$
\begin{enumerate}
\item a polyhedron $P_t$ (called vanishing polyhedron), adapted to the stratification $\cal S$ and whose real dimension is $dim_\C X -1$.
\item a vector field $E_t$ defined on all $X_{\epsilon,\eta}(t)$, tangent to each stratum of\\
$\stackrel{\circ}{X}_{\epsilon,\eta}(t)\setminus P_t$ induced by ${\cal S}$, with $\stackrel{\circ}{X}_{\epsilon,\eta}(t) =X_{\epsilon,\eta}(t)
\setminus \partial X_{\epsilon,\eta}(t)$,
that is continuous, weakly Lipschitz, and non-zero on $\stackrel{\circ}{X}_{\epsilon,\eta}(t)\setminus P_t$ , transverse to strata of $\partial X_{\epsilon,\eta}(t)$ (and toward interior), and zero on $P_t$. Also we suppose that the flow given by the vector field defines a continuous, surjective, simplicial map of $\partial X_{\epsilon,\eta}(t)$
onto $P_t$ and that $ X_{\epsilon,\eta}(t)$ is the mapping cylinder of  such map.
\end{enumerate}
\end{hypothesis}
In the case when $dim_\C X =1$, it is easy to see that the induction hypothesis holds true.
First, we explain the case of $dim_\C X =2$ in more detail closely following \cite{L2} and explain
the modifications for general inductive construction.

\subsection{The case of dimension two}
For simplicity, suppose that $f^c(p)=0$ and $p \in \overline{S}_i$ for all $S_i \in \cal S$.
From the topological preparation theorem (\cite{L2}), we can replace $X_{\epsilon,\eta}(t)$ by
$X_t = X_{\epsilon,\eta_1,\eta_2} \cap (f^c)^{-1}(t)$.
The subset $X_t$ is naturally stratified and we denote by ${\cal S}(t)$ its stratification. 

For convenience, we set $$D_t:=D_{\eta_1} \times \{t\}$$
and also denote by  $\phi_t: X_t \to D_t$ the map induced by the morphism $\phi$ (or the linear form $l$)
which is surjective.
 
By the definition of Cerf diagram, the restriction of $\phi_t$ to the strata of ${\cal S}(t)$ is of maximal rank at all the points of
 $\stackrel{\circ}{X}_t \setminus \cup \phi^{-1}(\Delta_\alpha)$, where $\stackrel{\circ}{X}_t = X_t \setminus \partial X_t$. The number of intersections between the Cerf diagram $\Delta$ and
$D_t$ is finite and is denoted as $y_i(t) \in D_t \cap (\cup \Delta_\alpha)$.
Note also that we have a topological covering
of $X_t \setminus \phi_t^{-1}(\Delta_\alpha)$ on $D_t \setminus(\cup \Delta_\alpha)$.

We remark that here it exists also a case where the polar curve does not exist. In \cite{L2},
this trivial case has not been discussed, but in such a case,
as the map $\phi$ is submersive away from $p$, the proof can be carried out without much difficulty.

Fix a point $\lambda_t \in D_t \setminus (\cup \Delta_\alpha)$.
Consider  simple paths from $\lambda_t$ contained in the interior of $D_t$ whose end points are $y_i(t)$ so that $\lambda_t$ is the unique common point of these paths (see figure 1).
We call $\delta(y_i(t))$ these paths between $\lambda_t$ and $y_i(t)$. Denote by $Q_t$ the union of such
paths.

\begin{figure}\begin{center}
\includegraphics[height=2.0 in]{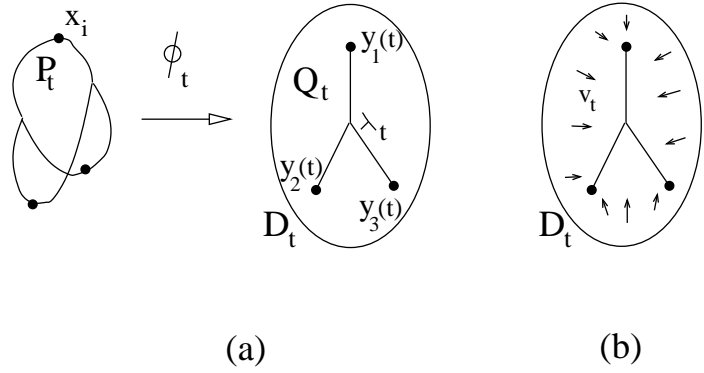}
\caption{(a) $Q_t$ and $P_t$ in dimension two, (b) Vector field $v_t$}
\label{fig1}
\end{center}
\end{figure}

We can also construct in $D_t$ a $C^\infty$ vector field which we call $v_t$ and which vanishes on $Q_t$,
tranverse to $\partial D_t$, toward interior, integrable. We may also assume that its flow $p_t:[0,\infty) \times (D_t - Q_t) \to D_t$
defines a map $\xi_t:\partial D_t \to Q_t$, by defining $\xi_t(u) = \lim_{\tau \to \infty}p_t(\tau,u)$ for
all $u \in \partial D_t$, that is surjective and continuous, simplicial and differentiable in the interior of
each simplices. 
As $\phi_t$ is a covering above $D_t \setminus Q_t$, we can lift a vector field $v_t$  to the vector field $E_t$, differentiable on $X_t \setminus \phi_t^{-1}(Q_t)$, vanishing on $\phi_t^{-1}(Q_t)$, continuous on $X_t$, integrable and transverse to $\partial X_t$ and toward interior.

Also, if $q_t:[0,\infty) \times (X_t - \phi_t^{-1}Q_t) \to X_t$ is the flow associated to $E_t$, 
we have a continuous and surjective map $\psi_t:\partial X_t \to \phi_t^{-1}(Q_t)$
 that is simplicial and differentiable on the interior of each simplex, and
 $X_t$ is the mapping cylinder of $\psi_t$.
In the case of dimension two, the polyhedron  $P_t$ is defined as $\phi_t^{-1}(Q_t)$ which is naturally stratified by the stratification induced by ${\cal S}(t)$.

\subsection{The collapsing in dimension two}
Parametrized version of the construction in the previous subsection has been carried out in \cite{L2} in two ways. One is for $t$ along a curve $t \in \gamma$ converging to $p$. The other is over a semi-disc containing such $\gamma$. The first case is explained here to define a collapsing cone, and for the second case, we refer to the general case of the induction.

Consider $D_{\eta_2}$ and a simple path $\gamma$ which joins $0$ and $t_0 \in \partial D_{\eta_2}$ and which is
transverse to $\partial D_{\eta_2}$.
We can make the construction of the vector field $E_t$ simultaneously for all $t$ along $\gamma(t)$.
We can choose the common point $\lambda_t$ to be for example, the barycenter of the points $y_i(t) \in D_t \cap (\cup \Delta_\alpha)$.
The projection onto $D_{\eta_2}$ induces a covering of $\Delta$ onto $D_{\eta_2}$ that is only ramified at $0$.
Therefore, the inverse image of $\gamma\setminus\{0\}$ defines $k$ curves on $\Delta$ each of which is diffeomorphic to
$\gamma\setminus\{0\}$ by the projection onto $D_{\eta_2}$. Such simple curve has $0$ in its closure and connects to
each $y_i(t)$. In the same way, $\lambda_t$ gives another distinct simple path $\Lambda$ whose
projection to $D_{\eta_2}$ induces a diffeomorphism to $\gamma$ outside $0$.
We can also choose the path $\delta(y_i(t))$ such that
$T_i = \cup_{t \in \gamma} \delta(y_i(t))$ forms differentiable triangle in $\cup_t D_t$ outside $0$.
The triangles $T_i$ have  in common only the simple path $\Lambda$ given by $\lambda_t$.
We can construct the vector fields $v$ continuous on $\cup_{t \in \gamma} D_t = D_{\eta_1} \times \gamma$,
differentiable on $\cup_{t \in \gamma} D_t  \setminus \cup T_i$, vanishing on $Q:=\cup T_i$, transverse
to $\partial D_{\eta_1} \times \gamma$, whose projection to $\gamma$ is zero and inducing a flow
$$p:[0,\infty) \times (D_{\eta_1} \times \gamma\setminus\cup T_i) \to D_{\eta_1}\times \gamma$$
which defines a map $\xi:\partial D_{\eta_1} \times \gamma \to Q$
by $\xi(z) = \lim_{\tau \to \infty} p(\tau,z)$ for all $z \in \partial D_{\eta_1} \times \gamma$, that is
continuous, surjective, simplicial, and differentiable in the interior of each simplex
(see figure (\ref{fig2}): this is a simplified figure and the triangles $T_i$ may not be linear as in the figure as
the points $\{ y_i(t) \}$ changes along $\gamma(t)$).
\begin{figure}\begin{center}
\includegraphics[height=2.2in]{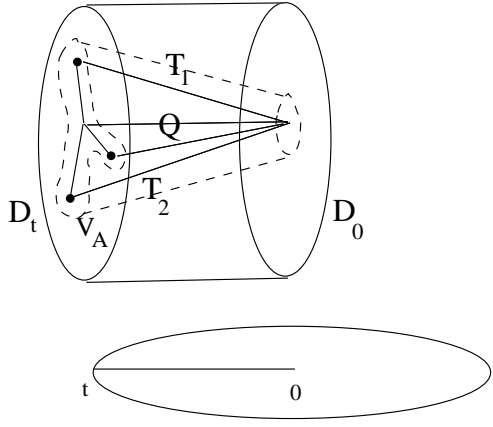}
\caption{$Q$ and its contracting  neighborhoods $V_A$}
\label{fig2}
\end{center}
\end{figure}

We denote for all $A >0$ by $V_A(Q)$ the closed neighborhood of $Q$ defined by
$$V_A(Q):= D_{\eta_1} \times \gamma \setminus p ( [0,A) \times \partial D_{\eta_1} \times \gamma )$$
where $\partial V_A(Q)$ is a differentiable manifold which has a projection onto $\gamma$
with a circle as fiber.

 As $\phi$ has maximal rank away from the Cerf diagram $\Delta$, the
space $\phi^{-1}(\partial V_A(Q)) \cap B_\epsilon(x)$ is a differentiable submanifold of $X_{\epsilon,\eta_1,\eta_2}$
that is a proper local trivial fibration on $\gamma$. 

Set $P := \phi^{-1}(Q) \cap B_\epsilon(p)$.
We construct a vector field on $X_{\epsilon,\eta_1,\eta_2} \cap f^{-1}(\gamma)\setminus P$ in the
following way. We fix some $A>0$.
\begin{enumerate}
\item If $y \notin \phi^{-1}(V_A(Q)) \cap B_\epsilon(p)$, there exists an open neighborhood $U_y$ of $y$
in $D_{\eta_1} \times \gamma$ that does not meet the closed set $\phi^{-1}(V_A(Q)) \cap B_\epsilon(p)$.
We define in $U_y$ a $C^\infty$ vector fields $G_y$ that lifts a differentiable vector field $\theta$
which is not zero on $\gamma$ and which takes $t_0$ to $0$ in a finite time $a >0$.
\item If $y \in \phi^{-1}(V_A(Q)) \cap B_\epsilon(p)$, there exists an open neighborhood $U_y$ of $y$ in
$D_\eta \times \gamma$ that does not intersect $P$ and in it, one can construct a vector field $G_y$
that lifts a vector field $\theta$, defined in $(1)$, and that is tangent to $\phi^{-1}(\partial V_{A'}(Q))$ for
all $A' >A$.
\end{enumerate}
We cover thus $X_{\epsilon,\eta_1,\eta_2} \cap f^{-1}(\gamma)\setminus P$  by the open sets $U_y$ with the vector fields $G_y$ in each of them
and by considering a partition of unity $g_y$ subordinate to the covering $U_y$, we define
the vector field $G = \sum g_y G_y$.
Such a vector field is differentiable in $X_{\epsilon,\eta_1,\eta_2} \cap f^{-1}(\gamma)\setminus P$,
 lifts $\theta$ and is tangent to $\phi^{-1}(\partial V_{A'}(Q))$ for $A' >A$.

The flow $\pi:[0,a] \times Z \to Z$ defined on $Z=X_{\epsilon,\eta_1,\eta_2}\setminus P$
gives a $C^\infty$ diffeomorphism of $X_{t_0}\setminus P_{t_0}$ onto $X_0\setminus \{0\}$ which extends to a continuous mapping of $X_{t_0}$ to $X_0$ sending $P_{t_0}$ to $\{0\}$.
$P$ is called the collapsing cone above $\gamma$.

L\^e also constructed collapsing cone over the semi-disc $D^-$ along the angular rays as described in (\ref{eqfoli}), which he used in the inductive construction(\cite{L2}).

\subsection{Inductive construction of polyhedron}
The general inductive construction which is  explained in great detail in \cite{L2} is quite involved and we only
give a brief scketch of some parts of them. 

Suppose that $dim_\C(X) \geq 3$ and that the induction hypothesis is verified in all dimensions of
the situation of dimension $\leq n-1$. To prove the induction statement for $X$ with $dim_\C (X) = n$, L\^e used the induction hypothesis at two different parts of $X$ as follows:
First consider
$$x_i(t) = \phi^{-1}(y_i(t)) \cap (\cup \Gamma_\alpha),$$
which is an isolated critical point over $y_i(t)\in D_t \cap (\cup \Delta_\alpha)$.
Consider also $\lambda(t)$ and simple paths $\delta(y_i(t))$ in $D_t$ as in the case of dimension two.
We further suppose that  $\lambda(t) = (0,t) \in D_{\eta_1} \times D_{\eta_2}$ because $\{0\} \times \C$ is not a component of $\Delta$ when $l \in \Omega$ (otherwise, this violates the isolated singularity condition).

Firstly, we  restrict $X$ to a hyperplane in $\Omega$ defined by $\{l=0\}$. The restriction $X \cap \{l=0\}$ is of dimension $(n-1)$ and we may apply the induction hypothesis for the restricted function $f^c|_{ X \cap (l=0)} \to \C$. The fiber $X_t \cap (l=0)$ have a vanishing polyhedron $P_t'$ and a vector field that we call $E_t'$. 

On the other hand, by taking a sufficiently small ball $D_s(y_i(t))$ of the critical value $y_i(t)$, we consider the projection 
$$\phi_t:X_t \cap \phi^{-1}(D_s(y_i(t))) \to D_s(y_i(t)),$$
which has also an isolated singularity at $x_i(t)$. As we have  $dim_\C X_t = n-1$, we can again apply the induction
hypothesis and we obtain a vanishing polyhedron $P_i(a_i)$ at the point $a_i \in \delta(y_i(t)) \cap \partial D_s(y_i(t))$,
and also a collapsing cone of $P_i(a)$ along $\delta(y_i(t)) \cap D_s(y_i(t))$, which we
denote by $\WT{P}_i$.

The above two polyhedra $P_t'$ and $\WT{P}_i$ are to be connected along the path $\delta(y_i(t))$ from $a_i$ to $\lambda(t)$. This is done by flowing the polyhedra $P_i(a)$ along a vector field $W'$ to $P_t'$ where it
will be glued to.
 
We set $\delta' = \delta(y_i(t))\setminus  D_s(y_i(t))$. 
Note that above $\delta_i'$ , the morphism $\phi_t$
induces a stratified fibration which is locally trivial. In fact, over the $ \delta_i'$, L\^e considers a controlled vector field
 $\Xi$ that is tangent to each stratum, weakly Lipschitz and lifts a $C^\infty$ vector fields $\xi$ which is a  non-zero vector field on $\delta_i'$ and whose flow goes from $a_i$ to $0$.
 
Then, the vector fields $E_t'$ of $\phi_t^{-1}(0)$ is  tranported onto all the fibers of $\phi_t^{-1}(u)$  for all $u \in \delta_i'$
via the flow $\Xi$, and we obtain the vector field $W$ on $ \phi_t^{-1}( \delta_i')$ whose restriction to $\phi_t^{-1}(0)$ is the field $E_t'$
and the restriction to $\phi_t^{-1}(u)$ is a vector field on  $\phi_t^{-1}(u)$.

 Consider a differentiable function $h$ on $\delta_i'$ that takes value 0 at 0 and which is non-zero and positive  on $\delta_i' \setminus \{0\}$. We regard $h$ as a function on $A_i =\phi_t^{-1}(\delta_i')$.  On $A_i$, we have a field $W' = W + h \Xi$ which is weakly Lipschitz, tangent to each stratum of $\stackrel{\circ}{A_i}$ and which lifts $h \xi$. 

The image of $P_i(a_i)$ under the flow of $W'$ is also a subpolyhedron $P_i'$ of $P_t'$. The trajectories of points of $P_i(a_i)$ by $V_1$ yield a polyhedron $R_i$. 

We define thus a polyhedron $S_i = \tilde{P}_i \cup R_i \cup P_i'$  and the vanishing polyhedron $P_t$ is defined by
$$P_t = P_t' \cup_{i=1}^k S_i.$$

The vector field $E_t$, which  is the contracting vector field to the vanishing polyhedron, can be constructed, and we
refer readers to \cite{L2} for its construction.
\subsection{Collapsing over semi-disc}
In this subsection, we briefly recall the parametrized construction of the above over the semi-disk $D^- =
\{z \in D^2_{\eta_2}| Re(z)<0\}$ exposed in \cite{L2}.

Recall that the collapsing cone $Q$ along $\gamma_0$
constructed in section 5.2 is of real dimension two. Similarly on $D^-$, we first construct the three dimensional $Q$ as follows.
Consider in $D_{\eta_1} \times D^-$ 
the Cerf diagrams $\cup \Delta_\alpha$, which projects
differentiably onto $D^-$ away from $0$. Denote its intersection points with each fiber over $t \in D^-$ as $y_i(t)$ for $i=1,\cdots,k$. For each $t \in D^-$,  take $\lambda(t)$ to be the barycenter of critical values $y_i(t)$ (we assume that this does not lie on the Cerf diagram), and choose paths $\delta(y_i(t))$ from $\lambda(t)$ to $y_i(t)$ continously on $t$.
Then, for each $i$, we obtain a set $$T_{i}=\cup_{t \in D^-} \delta(y_i(t)).$$ 
The $T_{i}$'s have in common the intersection $\Lambda=\cup_{t \in D^-}\{\lambda(t)\}$.
We set $Q:= \cup_{i} T_i$, which is a three dimensional figure with ribs given by $T_i$'s. 

The collapsing polyhedron of \cite{L2} is constructed in the following way.
In (5.3.2) of \cite{L2}, for each critical point $x_i(t)$ of $\phi$ above the critical value $y_i(t)$ with $t \in D^-$, L\^e chooses a radius $r(t)>0$ such that
$\cup_{t \in D^-} B_{r(t)}(x_i(t))=:B_i$ provides a sharp neighborhood
of $\cup_{t \in D^-} \{ x_i(t)\}$.  Here, sharp means that the neighborhood shrinks to $p$ or the radius converges to $0$ as $t$ approaches $0$. We may assume that each $B_i$ is disjoint and their closures meet at $p$.
One chooses such $r(t)$ as a real analytic function of $t
\in D^-$. 
For this, consider (as in (5.3.3) of \cite{L2}) 
 a real analytic function $s(t)$ with $1 \gg r(t) \gg s(t) >0$ for $t \in D^-$ and
define in $\C^2$ the set
$${\cal D}_i = \cup_{t \in D^-} D_{s(t)}(y_i(t)).$$
Also, consider a sharp (and sufficiently small) neighborhood $U$ of $Q \setminus \{0\}$ which meets ${\cal D}_i$ for $i=1,\cdots k$ but
does not intersect $y_i(t)$. As $U$ does not meet the Cerf diagram, L\^e uses the trivialization of $V = X_{\epsilon,\eta_1,\eta_2} \cap \phi^{-1}(U)$
over $D^-$. 
In this setting, L\^e may repeat the inductive construction of the previous subsection to 
define the collapsing polyhedron for all $t \in D^-$ simultaneously. The collapsing
vector field $E$ is also analogously constructed. We refer readers to \cite{L2} for more details.

\section{A proof of the main theorem}
Let $p$ be a critical point of $f$. After restricting to a normal slice through $p$, we can suppose the set $\{p\}$ to form a 0-dimensional stratum. We assume for simplicity that $f(p)=0$. Here is the slightly stronger form of the main theorem \ref{thm:main}.
\begin{theorem}\label{thm:normal}
With the above hypothesis, there exists a stratified  weakly controlled gradient-like vector field $V$ near $p$ for $f$ with continuous flow and whose unstable and stable set $W^u(p)$ and $W^s(p)$ at $p$ satisfy
 for every $S \in \cal S$, 
\begin{eqnarray*}
\dim_{\R} (W^u(p) \cap S) &\leq& \dim_{\C} S  \\
\dim_{\R} (W^s(p) \cap S) &\leq& \dim_{\C} S. 
\end{eqnarray*}
Moreover they satisfy 
$$\dim_{\R} W^u(p)=\dim_{\R} W^s(p)=n$$
\end{theorem}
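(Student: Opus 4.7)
The plan is to construct $V$ as a modification of L\^e's vector field from section \ref{leconstr}: work separately on the preimages $(f^c)^{-1}(\overline{D^-})$ and $(f^c)^{-1}(\overline{D^+})$ of the two half-discs at $p$, glue them along the critical fiber $X_0 := (f^c)^{-1}(0)$, and then add a correcting term to enforce the gradient-like property on $X_0\setminus\{p\}$ where the raw gluing degenerates.

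On the $D^-$-side I apply L\^e's construction along the horizontal segment $\gamma^- = [-\eta,0]$ to obtain the collapsing polyhedron $P^-$ of real dimension $n$ in $(f^c)^{-1}(\gamma^-)\cap B_\epsilon$ together with a weakly Lipschitz stratified vector field $V^-$ on $(f^c)^{-1}(\overline{D^-})\cap B_\epsilon$ that projects via $f^c$ to a horizontal vector field. I then rescale the horizontal projection by a factor $-s$ (with $s = \mathrm{Re}(z)$) so that it reads $-s\,\partial/\partial s$ on $\gamma^-$; this converts L\^e's finite-time collapse into an infinite-time convergence to the imaginary axis while preserving $V^-\cdot f = -s > 0$ on $D^-$. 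L\^e's diffeomorphism $X_{t_0}\setminus P_{t_0}\cong X_0\setminus\{p\}$ then guarantees that the orbits of $V^-$ which converge to $p$ are exactly those inside $P^-$, so $W^s(p)\cap(f^c)^{-1}(D^-) = P^-$. The strata-wise bound $\dim_\R(P^-\cap S)\leq \dim_\C S$ is inherited by induction on $\dim_\C X$: in each fiber $X_t$ the polyhedron $P_t$ meets $S\cap X_t$ (of complex dimension $\dim_\C S - 1$) in a set of real dimension at most $\dim_\C S - 1$, and sweeping along the one-real-dimensional $\gamma^-$ adds at most one. An identical construction applied to $-f^c$ produces on the $D^+$-side a vector field $V^+$ and a polyhedron $P^+$ of real dimension $n$ with projected component $+s\,\partial/\partial s$, $V^+\cdot f = s > 0$, and $P^+\subset W^u(p)$. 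Since both $V^-$ and $V^+$ vanish on $X_0$ after rescaling, they glue continuously into a single stratified field $V_0$ on a neighbourhood of $p$ in $X$.

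The field $V_0$ vanishes identically on $X_0\setminus\{p\}$, so it fails to be gradient-like there. I repair this by fixing a stratified weakly controlled gradient-like vector field $W$ for $f$ on $X\setminus\{p\}$ (which exists by the controlled lift-and-patch theorem from section 2, since $p$ is the unique critical point of $f$ in a punctured neighbourhood) and defining
\[
V \;=\; V_0 \;+\; \psi\, W,
\]
where $\psi$ is a stratified cutoff, supported in a sharp neighbourhood of $X_0\setminus\{p\}$ in the sense of L\^e's section 5.4, equal to $1$ on a smaller sharp neighbourhood of $X_0\setminus\{p\}$, and identically zero both on $P^-\cup P^+$ and on a small neighbourhood of $p$ in $X$. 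Such a $\psi$ exists because $P^\pm\cap X_0 = \{p\}$. Since $V_0\cdot f\geq 0$ with equality only on $X_0$, and $W\cdot f > 0$ on $X\setminus\{p\}$, we obtain $V\cdot f > 0$ off $p$, so $V$ is gradient-like.

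The main obstacle is to verify that this modification does not enlarge $W^u(p)$ or $W^s(p)$ beyond $P^\pm$, and that $V$ inherits the weak controllability and weak Lipschitz properties needed for Verdier's theorem to produce a continuous stratum-preserving flow. For the first point, on $P^\pm$ we have $\psi = 0$ and $V$ restricts to $V^\mp$, so convergence to $p$ is preserved; conversely, any orbit outside $P^\pm$ that tried to approach $p$ would eventually enter the region $\{\psi>0\}$, where the $W$-component strictly increases $f$ and pushes the orbit across $X_0$ into $\{f > 0\}$, whence it escapes every neighbourhood of $p$ along $V^+$. Combined with the strata-wise bounds already established for $P^\pm$, this yields $W^u(p) = P^+$ and $W^s(p) = P^-$, each of real dimension $n$ and meeting every stratum $S$ in dimension at most $\dim_\C S$. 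Weak controllability of $V$ follows by combining the control data compatible with L\^e's construction on the sharp neighbourhoods with the controlled construction of $W$, using the weak compatibility $|d\rho_S\circ V|\leq A\rho_S$ rather than strict equality, which is exactly what allows orbits to converge to $p$ in infinite time.
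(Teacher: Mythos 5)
Your overall architecture --- L\^e's collapsing data over the two half-discs, a lift of a horizontal field on the disc, a gluing, and a correction near the zero fibre --- is the same as the paper's, but two steps as written would fail. First, the normalization $\pm s\,\partial/\partial s$ with $s=Re(z)$ kills the field on the preimage of the \emph{whole imaginary axis} $\{f=0\}$, a real hypersurface of dimension $2n-1$, not merely on the fibre $X_0=(f^c)^{-1}(0)$. Your cutoff $\psi$ is supported only in a sharp neighbourhood of $X_0\setminus\{p\}$, so at a point $x$ with $f^c(x)$ purely imaginary and bounded away from $0$ one has $V(x)=V_0(x)+\psi(x)W(x)=0$ although $x$ is not a critical point; in particular the assertion that $V_0\cdot f\ge 0$ ``with equality only on $X_0$'' is false under your own normalization, and $V$ is not gradient-like. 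This is exactly why the paper takes $V_{st}=\sqrt{x^2+y^2}\,\partial/\partial x$, which vanishes only at the origin of $D_{\eta_2}$: then the only degenerate locus is $(f^c)^{-1}(0)$, and it is repaired by a field $V_0=\rho V_0'$ obtained by lifting $\partial/\partial x_3$ through $\phi=(l,f^c)$ on a neighbourhood $N_0$ of $(f^c)^{-1}(0)\setminus\{p\}$ chosen away from the polar curve and from the stable and unstable sets.

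Second, you treat L\^e's construction as if it already supplied a stratified vector field on all of $(f^c)^{-1}(\overline{D^-})\cap B_\epsilon$, including on the collapsing polyhedron. It does not: the lift in section 5.2 is built only on the complement of $P$, and whether it extends continuously over $P$ is precisely the delicate point; most of the paper's proof (the sharp neighbourhood $V_{sharp}(P)$, the Thom--Mather trivialization $\Psi$, the polyhedron and contracting field re-defined by parallel transport, and the special lift tangent to the levels $\partial V_{A'}(P)$ whose fibre component is shown to decay like $|f^c(y)|\,|E(y)|\,|dA/ds(0)|$ near $P$) exists solely to produce such an extension, and rescaling by $-s$ does nothing toward it. Relatedly, your argument that the correction creates no new orbits limiting to $p$ is not sound: increasing $f$ does not force an orbit to cross $X_0$ or to leave a neighbourhood of $p$, since orbits in the stable set themselves increase $f$ to the critical value $f(p)=0$ without ever reaching $\{f>0\}$. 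The paper excludes new orbits by a mechanism your generic $W$ lacks: the correcting field preserves the linear form $l$, hence points outward along $\partial V_{A'}(P)$, while the lifted field is tangent to these levels (along which $|l|$ decreases near $p$), so the glued field can never penetrate the levels $A'\to\infty$ and cannot approach $p$ in the gluing region. Finally, the per-stratum bound $\dim_\R(P_t\cap S)\le\dim_\C S-1$ is not part of L\^e's theorem; it must be extracted, as in the paper, from the inductive construction of $P_t$ (from $P_t'$, the local polyhedra $\widetilde P_i$, and the connecting pieces $R_i$ moved by stratum-preserving flows), not simply asserted.
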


\begin{corollary}
Including the tangential direction, the above theorem \ref{thm:normal} implies the corollary \ref{cor:main}.
\end{corollary}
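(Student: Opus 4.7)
The plan is to promote Theorem \ref{thm:normal}, which handles the case where $\{p\}$ is itself a zero-dimensional stratum, to a critical point $p$ lying in a positive-dimensional stratum $S$ of complex dimension $s$, by combining the normal construction with the ordinary smooth Morse picture in the tangential direction $S$.

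First I would choose a normal slice $N$ to $S$ at $p$, cut out by a generic complex affine subspace of codimension $s$ meeting $S$ transversally at $p$. By Whitney's condition (b) and genericity, $N$ inherits a Whitney stratification $\mathcal{S}_N$ of a purely $(n-s)$-dimensional reduced complex analytic variety in which $\{p\}$ is a zero-dimensional stratum, and the restriction $f^c|_N$ is still complex analytic with $f|_N$ Morse at $p$ (this is exactly the setting of Lemma \ref{lem:restmor} applied iteratively). Applying Theorem \ref{thm:normal} to $(N,\mathcal{S}_N,f^c|_N)$ yields a weakly controlled gradient-like vector field $V_N$ on a neighborhood of $p$ in $N$ whose unstable and stable sets $W^u_N(p)$, $W^s_N(p)$ each have real dimension $n-s$ and intersect every stratum of $\mathcal{S}_N$ in a set of real dimension at most its complex dimension.

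Next, since $f|_S$ is a smooth Morse function at $p$ on the complex manifold $S$ of real dimension $2s$, choose any local Riemannian metric on $S$ and take $V_S = -\nabla (f|_S)$ near $p$; this is in standard Morse form with unstable set $W^u_S(p)$ of real dimension $m_S(p)$ and stable set $W^s_S(p)$ of real dimension $2s-m_S(p)$. Using the Thom--Mather tubular neighborhood $(T_S,\Pi_S,\rho_S)$ of $S$, with $\Pi_S^{-1}(p)$ canonically identified with the normal slice $N$, I pick control data in which $N$ sits as a fiber of $\Pi_S$ and lift $V_S$ to a controlled vector field $\WT{V}_S$ on $T_S$ (Corollary after the controlled lifting theorem); I also transport $V_N$ along the Thom--Mather trivialization of $\Pi_S$ over a neighborhood of $p$ in $S$ to obtain a stratified vector field $\WT{V}_N$ on $T_S$ whose restriction to every fiber $\Pi_S^{-1}(q)$ is a copy of $V_N$ on that fiber, vanishing on $S$ itself. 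The sought vector field is then
\[
V \;:=\; \WT{V}_S + \WT{V}_N
\]
on $T_S$; on strata away from $T_S$ it is obtained by the constructions already fixed.

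It remains to verify the three properties of the corollary. Because $\WT{V}_S$ is a genuine controlled lift (so $d\Pi_S \WT{V}_S = V_S\circ \Pi_S$ and $d\rho_S \WT{V}_S = 0$) and $\WT{V}_N$ is tangent to the fibers of $\Pi_S$ with the weak control estimate inherited fiberwise from $V_N$, the sum $V$ is still weakly controlled in the sense of Definition \ref{control}, and its flow is continuous by Verdier's theorem. Gradient-likeness in the standard form at $p$ follows because, after choosing coordinates on $S$ in which $V_S$ is in Morse normal form and combining with $V_N$ which is gradient-like on the fiber $N$, the critical point $p$ sees the direct sum; $V(f) = V_S(f|_S)\circ\Pi_S + \WT{V}_N(f)$ is strictly negative off the union of the local Morse neighborhoods, using that $V_N$ is gradient-like for $f|_N$ and $\rho_S$-fibers stay transverse to level sets of $f$. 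Finally, the unstable set of $V$ at $p$ is the image of $W^u_S(p)\times W^u_N(p)$ under the Thom--Mather trivialization of $T_S$, so
\[
\dim_{\R} W^u(p) \;=\; m_S(p) + (n-s),
\]
and symmetrically $\dim_{\R} W^s(p) = (2s-m_S(p))+(n-s) = n+s-m_S(p)$; the bound $\dim_{\R}(W^u(p)\cap S')\leq \dim_{\C} S'$ for an arbitrary stratum $S'$ follows by decomposing $S'\cap T_S$ as a fiber bundle over $S'\cap S$ with fiber a stratum of $\mathcal{S}_N$ and applying the corresponding bound from Theorem \ref{thm:normal} fiberwise. The main technical obstacle is step three: ensuring that the Thom--Mather trivialization of $\Pi_S$ can be made smooth enough for the fiberwise transport of $V_N$ to preserve both weak controllability and the gradient-like condition simultaneously; this is handled by choosing control data compatible with $f^c$ (so that $f^c$ is constant along $\Pi_S$-fibers only up to the tangential variation absorbed in $V_S$) before performing the lift.
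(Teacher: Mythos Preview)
Your approach is essentially the same as the paper's: restrict to a normal slice, apply Theorem~\ref{thm:normal} there, add the smooth tangential gradient on $S$, and patch globally by a partition of unity away from critical points. The paper's own argument for this corollary is extremely terse (four sentences), so your version simply fills in details the authors leave implicit; in particular your identification of $W^u(p)$ with the product $W^u_S(p)\times W^u_N(p)$ in the local trivialization is exactly the content of the paper's phrase ``by adding a tangential component (as in the smooth case)''.

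One wording caveat: in your last paragraph you invoke ``control data compatible with $f^c$''. Taken literally in the sense of the paper's definition this would force $f^c\circ\Pi_S=f^c$, i.e.\ $f^c$ constant along the fibers of $\Pi_S$, which is impossible since $f|_N$ has a genuine Morse critical point at $p$. What you actually need (and what your parenthetical seems to be gesturing at) is only that the fiberwise-transported $V_N$ remain gradient-like for the restriction of $f$ to nearby fibers $\Pi_S^{-1}(q)$; this follows from a $C^1$ perturbation argument on each stratum, not from compatibility of the control data with $f^c$. Apart from this phrasing issue the argument is sound and matches the paper.
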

\begin{proof}
For the case $n=1$, the theorem can be proved without much difficulty and we first explain this case, that is $\dim_{\C}X =1$, before
proving the general case.
As we consider only the neighborhood of $p$, we may assume that $X$ is given the stratification
$$\mathcal S=\{S_0=\{p\},S_1= X \setminus \{p \} \}.$$
Recall that  $df^c(p) \neq 0$ since the function $f$ is Morse and $\{p\}$ is a stratum of $\cal S$. Consider
$X_{\epsilon, \eta_2} = X \cap B_\epsilon (p) \cap (f^c)^{-1}(D_{\eta_2})$ and the map $f^c:X_{\epsilon,\eta_2} \to D_{\eta_2}$. As  $X_{\epsilon,\eta_2}$ is
of complex dimension one, and $df^c \neq 0$, $f^c$ defines a holomorphic map which is a local diffeomorphism
except at $p$.  

We consider the following vector field $V_{st}$ on $D_{\eta_2}$. If the coordinate of $D_{\eta_2}$ is
given by $x+ y\sqrt{-1}$, we define
\begin{equation}\label{stvf}
V_{st}(x,y)=\sqrt{x^2+y^2}\frac{\partial}{\partial x}.
\end{equation}

One can easily check that induced flows are parallel to the $x$-axis and, particularly, that the only gradient lines which can have the origin as limit point are those contained in the $x$-axis. Also one can check that
the flow of the vector field takes infinite time to approach the point $0$ if one starts at negative real axis.

Now, we consider a map $f^c:X_{\epsilon,\eta_2} \setminus \{p\} \to D_{\eta_2} \setminus \{0 \}$, which is submersive.
Hence, we can take a controlled lift $V'$ of the vector field $V_{st}$ by the map $f^c$ with respect to
some control system on $X$, i.e. we have
$f^c_* (V') = V_{st}$. The lifted vector field $V'$ may be considered a continuous vector field on $X$ by
defining $V'(p)=0$. (In fact $X \setminus \{p\}$ near $p$ may considred as one stratum, any lift is a controlled lift.)

To make sure that the vector field $V$ is weakly controlled at $p$, let $\rho$ be the
distance function in the chosen control system of $X$. 
The first condition $d\Pi_S\circ V'_{|T_S\cap R}=V'\circ\Pi_S$ is trivially satisfied as $\Pi_S$ is the projection onto $S=\{p\}$. To meet the condition $|d\rho_S\circ V'_{|T_S}|\leq A\rho_S(x)$ for some $A$,
we may multiply a smooth function $h(\rho)$
of $\rho$ vanishing at $p$ to the constructed vector field $V'$ to  obtain
new vector field $V:=h(\rho) \cdot V'$. By a suitable choice of $h(\rho)$ the second condition
also can be satisfied, while the trajectories of $V'$ equal the trajectories of $V$ as sets.
It is easy to see that the stable and unstable set of the vector field $V$ on $X$ are real one dimensional
sets which are given by several half-lines with common vertex $p$.
%
Hence, this proves the theorem for the case $n=1$.

Now, we consider the general case $n \geq 2$. The proof consists of the following steps.
The first step is to consider L\^e's construction of the collapsing polyhedron $P$ and the contracting vector field $E$  over a semi-disc $D^- = \{z \in D_{\eta_2}| Re(z) <0 \}$, and use it to lift the vector field (\ref{stvf}). This
vector field on $D^-$ is different from that of L\^e, it will provide a
gradient-like flow over $X \cap B_\epsilon(x) \cap (f^c)^{-1}(D^{-})$ whose unstable set of the critical point will be given by the collapsing polyhedron over the negative real axis.
Here to show that such a flow is well-defined along the collapsing polyhedron, we need to modify
the construction of L\^e, to find a special lifting of  the vector field in $D^-$ in a neighborhood of $P$.

The second step is to consider an analogous construction over a semi-disc $D^+$, and glue them along the intersection $D^+ \cap D^-$.
The final step is to have the additional construction of a vector field in a neighborhood $N_0$ of $(f^c)^{-1}(0) \setminus \{0\}$, then the final vector field will be obtained by partition of unity and will prove the main theorem.

The first step of  constructing such a vector field over a semi-disc $D^-$ can be  divided into the following procedures which will be explained later in detail.
\begin{enumerate}\label{firstst}
\item Following \cite{L2}, we consider parametrized constructions of a vanishing polyhedron $P_t$ in $f^{-1}(t)$ and a contractible vector field $E_t$ for all $t \in D^-$, and denote the resulting collapsing polyhedron
as $P$ and the contracting vector field as $E$. By integrating the flow of $E$, we define a closed neighborhood $V_A(P)$ of $P$ as in \cite{L2}.
\item Consider a sharp neighborhood of $P$ which is defined as $V_{1/|f^c(x)|}(P)$ over $D^-$, and
consider a map $f^c: V_{1/|f^c(x)|}(P) \to \C$ which is a submersion. 
\item Using the trivialization obtained by applying Thom-Mather isotopy lemma to $f^c$, we redefine
the collapsing polyhedron $\WT{P}$, and contracting vector field $\WT{E}$ (and denote it again by $P$ and $E$ for simplicity)
\item Consider the standard vector field (\ref{stvf}) on $D^-$, and define a specific lifting to $X_{\epsilon,\eta_1,\eta_2} \cap f^{-1}(D^-)$ which is tangent to $V_A(P)$ away from $P$,
so that the lifted vector field extends continously on $P$ and can be integrated.
\end{enumerate}

For simplicity we use the following notation
$$X(D^-):=X_{\epsilon,\eta_1,\eta_2} \cap (f^c)^{-1}(D^-)$$

Following \cite{L2} the construction of collapsing cone over the semi-disk, we have a 
collapsing polyhedron $P$ and a contracting vector field $E$ on $X(D^-)$, which is continuous, weakly Lipschitz,
vanishing at $P$ and its associated flow provides a continuous strata preserving map
$$\rho:[0,\infty) \times X_{D^-} \to X_{D^-}$$
which sends $X_{D^-} \setminus P$ to itself.
We denote by $\partial_{out} X_{D^-}$ the following part of the boundary of $X(D^-)$:
$$X\cap B_\epsilon \cap \phi^{-1}(\partial D_{\eta_1} \times D^-), X\cap \partial B_\epsilon \cap \phi^{-1}(D_{\eta_1} \times D^-)$$
We remark that the vector field $E$ is transverse to $\partial_{out} X_{D^-}$
on which it points inward, moreover its projection under $f^c$ vanishes.
We may define the closed neighborhood of $P$ by
$$ V_A(P):= X(D^-) \setminus \rho \big( [0,A) \times \partial X_{out}(D^-) \big).$$
It is not hard to see that $f^c:V_A(P) \to D^-$ is still a submersion
from the Morse condition of $f$ in the stratified settting. The main purpose of the contracting vector field $E$ is to construct the above closed neighborhood $V_A(P)$,
so that one can define a flow preserving these levels determined by $V_A(P)$.

L\^e then considers (as in section 5.2) a lifting the vector field on $D^-$ whose flow converges to $0$ and whose
trajectories provides a  foliation of $D^-$ given by
the lines $\gamma_\theta$ where 
\begin{equation}\label{eqfoli}
\gamma_\theta = \{ \eta_2 te^{i \theta}| t \in [0,1] \} \;\;\;\; \textrm{for}  \; \frac{\pi}{2} < \theta < \frac{3\pi}{2}.
\end{equation}
(See figure (\ref{fig3}) (a)). Such a lifting is used to define a mapping sending $X(D^-) - P$ to 
$X_0 \setminus \{p\}$

\begin{figure}\begin{center}
\includegraphics[height=2.0in]{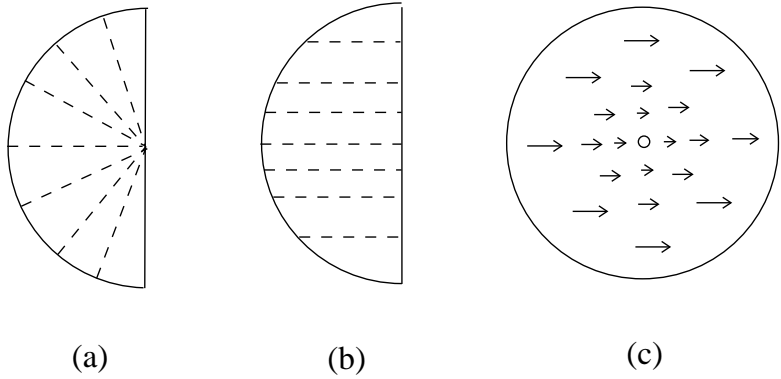}
\caption{(a) Flows in \cite{L2} (b) Flows in this paper (c) $V_{st}$}
\label{fig3}
\end{center}
\end{figure}

 Instead, we make a similar construction over the semi-disc $D^-$, but considering the standard vector field $V_{st}$ of (\ref{stvf}) whose trajectory gives the foliation of $D^-$ by the horizontal lines (figure (\ref{fig3})(b))
$$\gamma_j = \{ z \in D^{-} | Im (z) = j \} \;\; \textrm{for} \; -\eta_2 < j < \eta_2.$$
We remark that the Cerf diagram does not intersect $\{0\} \times D_{\eta_2}$, hence none of the $\{0\} \times \gamma_j$'s.
This finishes procedure (1).

There is a subtle point in lifting the vector field.  L\^e's construction of the lifting as in  section 5.2 is actually
carried out away from the collapsing polyhedron. Hence L\^e obtains a map from $X(D^-) \setminus P$ to $X_0 \setminus \{p\}$ via a flow
which, however, is not shown to extend over $P$, as the lifted vector field may not have a limit toward the collapsing polyhedron $P$.
(Intuitively, it seems clear that such a vector field should extend
over $P$ but it turns out to be rather subtle). 

In our case, we would like to define the vector field everywhere, hence we proceed in a slightly different way to prove the
extension. We now explain procedure (2). Instead of $V_A(P)$, we consider a sharp neighorhood by changing $A$ to $\infty$ as
we approach the point $p$. By sharp, we mean that the neighborhood shrinks to the point $p$ as we approach it.
 Namely, we may set $A = \frac{1}{|f^c(x)|}$ for each $x \in X(D^-)$ or define
$$ V_{sharp}(P):= X(D^-) \setminus \big(\cup_{x\in X_{out}(D^-)}\rho ([0,\frac{1}{|f^c(x)|}) \times \{x\})\big).$$
As $f^c:V_{sharp}(P) \to D^-$ is still a proper submersion, by the Thom-Mather isotopy lemma, we obtain a stratified trivialization $$ \Psi: V_{sharp}(P) \cong D^- \times F_t$$ via
$f^c$, where $F_t = V_{sharp}(P) \cap (f^c)^{-1}(t)$ for the given $t \in \partial D^-$, so that the
projection of $\Psi$ to the first component equals the map $f^c$. We may regard polar curves as additional stratas and that this trivialization preserves these stratras also.

Given this trivialization, we will construct a new collapsing polyhedron denoted by $\WT{P}$ and a new contracting vector field denoted by $\WT{E}$, using $P_t$ and $E_t$ for $t=-\eta_2$. This is somewhat unsatisfactory  but otherwise we do not
know how to construct a gradient-like vector field which extends to $P$.

Via the trivialization $\Psi$
we consider parallel transports of  the vanishing polyhedron $P_t$ and the contracting vector field $E_t|_{F_t}$ over $D^-$ and define the collapsing polyhedron $\WT{P}$ and the contracting vector field $\WT{E}$ in $D^- \times F_t$, and hence in $V_{sharp}(P)$.
Note that parallel transport sends a point of the polar curve to another point of the polar curve.
This provides a new collapsing polyhedron $\WT{P}$, which a priori may be different from $P$, but over a contractible set $D^-$ these should contain equivalent informations. Also, we may extend $\WT{E}$ from $V_{sharp}(P)$ to
$X(D^-)$ so that it is continuous, weakly Lipschitz, transverse to $\partial X_{out}(D^-)$, pointing inward and 
whose flow provides a retraction onto $\WT{P}$. We may also assume that with the new vector field $\WT{E}$,
the time it takes for the flow from $\partial X_{out}(D^-)$ to $V_{sharp}(P)$ is a smooth function
on $D^- \times F_t \subset D^- \times M$. This is procedure (3). We now fix $\WT{P},\WT{E}$ and call it again by $P$ and $E$ for simplicity of
expressions.

Now, we explain how to choose a lifting of the standard vector field (\ref{stvf}) on $D^-$. We need to
do it carefully to make sure that the flow  extends  continuously over $P$. 

We construct a lifting of $V_{st}$ in $D^-$ to 
$X(D^-) \setminus P$ in the
following way. We choose $A_0>2$.
\begin{enumerate}
\item If $y \notin V_{A_0}(P)$ for $y \in X_{D^-}$, there exists an open neighborhood $U_y$ of $y$ in $X(D^-)$
  that does not meet the closed set $V_{A_0}(P)$.
In $U_y$, we define a $C^\infty$ vector field $\WT{G}_y$ as a lifting of the differentiable vector field 
$V_{st}$.

\item If $y \in V_{A_0}(P) \setminus V_{sharp}(P)$, there exists an open neighborhood $U_y$ of $y$ in
$X(D^-) \setminus V_{sharp}'(P)$ where $V_{sharp}'(P)$ is constructed in the same way as $V_{sharp}(P)$ with
$A = \frac{2}{|f^c(x)|}$ instead. 
Hence $U_y$ does not intersect $P$ and in it, one can construct a vector field $\WT{G}_y$
that lifts a vector field $V_{st}$ and that is tangent to $\partial V_{A'}(P)$ for
all $A' >A_0$. The lifting exists as $\phi$ is submersive away from the polar curve.

\item If $y \in V_{sharp}(P) \setminus P$, we find a special lifting in the following way. Recall that we have
a trivialization $\Psi: V_{sharp}(P) \cong D^- \times F_t$. Denote $\Psi(y) = (f^c(y),b(y))$.
Consider the curve $\sigma:[-c,c] \mapsto D^- \times F_t$ defined by translation, i.e. for $s\in [-c,c]$,
$$\sigma(s) =(f^c(y) + s,b(y)).$$
By assumption, there exists a smooth function $A(s)$ such that each point $\Psi^{-1}(\sigma(s))$ lies in the time $A(s)$ level $V_{A(s)}(P)$
of the flow $\rho$ for a smooth function $A(s)$. In fact $A(s)$ will not be constant, as the flow $\rho$ starts from
$\partial X_{out}(D^-)$ and not from $\partial V_{sharp}(P)$.

But by choosing smaller $c'$, with $0<c'<c$ if necessary, we may assume that
$$\WT{\sigma}(s):=\rho_{A(0) - A(s)}(\Psi^{-1} \circ \sigma(s))$$ lies in $\WT{V}$ for $ -c' < s<c'$, and
the new curve $\WT{\sigma}(s)$ will be tangent to $V_{A(0)}(P)$ by the construction.
Consider the image curve $\WT{\sigma}(-c',c')$ and we take a unique vector $\WT{G}(y)$ tangent to $\WT{\sigma}$
at $s=0$ which satisfies $(df^c)\WT{G}(y)=V_{st}$.
In this way, we define a unique lifting $\WT{G}$ of $V_{st}$ in $V_{sharp}(P) \setminus P$.
\end{enumerate}

The main reason for the last construction is to use the trivialization $\Psi$ as a reference.
Namely, as the new contracting vector field $\WT{E}$ is defined in $V_{sharp}(P)$ via parallel tranport of the  trivialization,
the map $\rho_{A(0) - A(t)}$ in fact commutes with the trivialization.  Hence, we have
$$\Psi(\WT{\sigma}(s)) = \big(f^c(y)+s, \rho_{A(0) - A(s)} (b(y)) \big)$$

Now, we claim that the lifting $\WT{G}$ extends to $P$ continuously. Intuitively, this is because we have chosen a special lifting so that there exists no ambiguity in its limit approaching $P$. We will
show that the fiber component of the lifted vector field $\WT{G}(y)$ in terms of the trivialization of $\Psi$ vanishes
in a uniform way as we appraoch $P$.

We denote  $\Psi_*(\WT{G}(y)) = (V_{st}(f^c(y)),\WT{G}_F(y))$.
As $|V_{st}(z)| = |z|$, it is easy to see that the fiber component is
$$|\WT{G}_F(y)| = |\frac{d}{ds}|_{s=0} \rho_{A(0) - A(|f^c(y)|s)}(b(y))| = |f^c(y)|\cdot |E(y)|\cdot |\frac{dA}{ds}(0)|.$$
This is because the flow $\rho$ is generated by the vector field $E$ or $E_t$.

As $y \in V_{sharp}(P) \setminus P$ approach the collapsing polyhedron $P$ ( or as $A(0)$ approaches $\infty$), the $|E(y)|$ approaches zero, whereas $|f^c(y)|$ and $|\frac{dA}{ds}(0)|$ is bounded in its local compact neighborhood and hence the fiber component vanishes as we approach $P$.  This proves the desired property of the lifted vector field near $P$, and we extend $|\WT{G}(y)|$ continuously as $(V_{st},0)$ in the trivialization $D^- \times F_t$.

It is not hard to see that the resulting vector field is weakly Lipschitz, and its associated flow is stratum preserving. Hence via $\Psi$, we can extend the lifted vector field to $P$ in $V_{sharp}(P)$.

We cover thus $X(D^-)\setminus P$  by the open sets $U_y$ with the vector fields $\WT{G}_y$
and by considering a partition of unity $g_y$ subordinate to the covering $U_y$, we define
the vector field $$\WT{G} = \sum g_y \WT{G}_y.$$
Such a vector field is stratified and differentiable in $X(D^-)\setminus P$,
lifts $V_{st}$, is tangent to $\partial V_{A'}(P)$ for $A' >A$, and preserves $P$.

The flow of the vector field $\WT{G}$ gives a diffeomorphism from $X_{t_0}\setminus P_{t_0}$ to $X_0\setminus\{p\}$ 
for $t_0 \in \gamma_0$ and  from $X_{t_j}\setminus P_{t_j}$ to $X_{t^{'}_j}\setminus P_{t^{'}_j}$ for $t_j,t_j' \in \gamma_j$ and it extends to a continuous map from $X_{t_0}$ to $X_0$ and from $X_{t_j}$ to $X_{t^{'}_j}$, sending respectively $P_{t_0}$ to $\{p\}$ and $P_{t_j}$ to $P_{t^{'}_j}$.

Hence $\WT{G}$ provides over $X(D^-)$ the desired  stratified gradient-like vector field, which has real $n$-dimensional stable set given by $\cup_{t_0 \in \gamma_0} P_{t_0}$. This finishes the first step of the proof.

We claim that the polyhedron $\cup_{t \in \gamma_0} P_t$ as an unstable set of $p$ satisfies the dimension estimate (\ref{dimest1}) of the main theorem . This can be checked by considering carefully L\^e's inductive procedure (section 5.3) constructing $P_t$.
The estimate is clearly true for $n=1$. We remind that the vanishing polyhedron is constructed in section 5.3 by gluing  the polyhedron $R_i$, obtained by flowing (via $W'$) the vanishing polyhedron $P_i(a)$
to $P_t'$, where  by induction we may assume  that $P_i(a)$ and $ P_t'$ satisfy the estimates (\ref{dimest1}).
Here the flow $W'$ is   stratum-preserving as it is the sum of a controlled vector field ($h \Xi$) and a weakly Lipschitz vector field $W$. As a consequence, the dimensional estimate (\ref{dimest1}) continues to hold throughout the construction of $P_t$. 

Now, we begin the second step of the proof.
As the argument above used only the contractibility of $D^-$, we can in fact perform the
same construction for a slightly larger set
$$\widetilde{D}^- := \{z\in D_{\eta_2} \setminus \{0\}\; |\; \pi - \epsilon  < arg(z) < 3\pi + \epsilon \},$$ 
where $arg(z)$ is defined appropriately, and 
the vector field, which we call $\WT{G}^-$, over
$\widetilde{D}^- $ gives rise to a stable set at the critical point $p$ of the expected real dimension.

Also, we carry out a similar construction over the set
$$\widetilde{D}^+ := \{z\in D_{\eta_2} \setminus \{0\}\; |\; -\pi - \epsilon < arg(z) < \pi + \epsilon \},$$
with the vector field induced from $V_{st}$ on it. Hence, the vector field, which we call $\WT{G}^+$, over
$\widetilde{D}^+ $ gives rise to an unstable set of the expected real dimension at the critical point $p$.
Note that $\WT{D}^+ \cup \WT{D}^-  = D_{\eta_2} \setminus \{0\}$, and also $\WT{D}^+ \cap \WT{D}^-$ does not
contain $\gamma_0$. 

Now, we glue the two fields $\WT{G}^\pm$ by means a partition of unity to obtain a stratified weakly Lipschitz and controlled gradient-like vector field 
$V'$ over  $D_{\eta_2} \setminus \{0\}$. 
We can use a partition of unity depending only on the angular parameter of $D_{\eta_2}$.
Note that both the vector fields $\WT{G}^\pm$ approaches zero as we approach 
the point $p$. Hence, the gluing is well-defined near $(f^c)^{-1}(0)$. But the vector field $V'$, if we are to extend it continously over $X$, must vanish on $(f^c)^{-1}(0)$, hence $V'$ can not be the gradient-like vector field over $X$ we are looking for
(as a gradient-like vector field  is required to vanish only at critical points).

Still, the new vector field $V'$ projects down to $V_{st}$ by the map $f^c$, and hence the stable and unstable set of $p$ exist only along $\gamma_0$. So the stable (resp. unstable) set of $\WT{G}^+$ (resp. $\WT{G}^-$) remains as
the stable (resp. unstable) set of the vector field $V'$ at $p$.

Up to now we have a vector field $V'$ defined in $B_\epsilon(p)\setminus (f^c)^{-1}(0)$. It may or may not
extend continuously over $B_\epsilon(p)$. Hence, to solve this problem, 
we consider in addition, another vector field $V_0$ defined in the following way.

Consider a neighbourhood $N_0 \subset (B_\epsilon \setminus \{p\})$ of $(f^c)^{-1}(0)\setminus \{p\} $. 
Recall from the theorem \ref{h1} that $p$ is an isolated point in $\Gamma_{\overline{S_i}} \cap (f^c)^{-1}(0)$, and hence
$N_0$ can be chosen away from polar curves.

As we consider gradient-like vector fields, it is clear that their unstable and stable set of $p$ do not intersect with
$(f^c)^{-1}(0)$ except at $p$. Hence as they are closed sets, we may  also assume that the
neighborhood $N_0$ does not intersect both the unstable and stable sets of $p$ (see figure \ref{fig4})
and does not intersect the polar curves.

\begin{figure}\begin{center}
\includegraphics[height=2.3in]{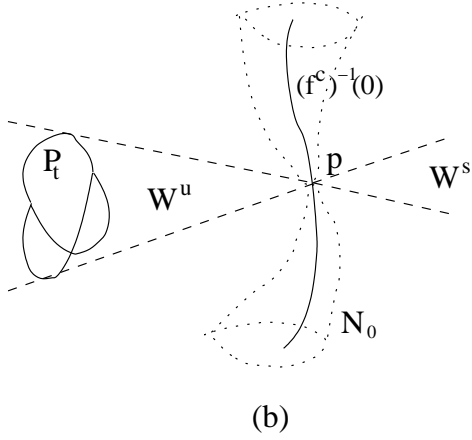}
\caption{Neighborhood $N_0$}
\label{fig4}
\end{center}
\end{figure}

As a consequence, $\phi|_{N_0}$ is submersive. So we can take a controlled lift, with respect to some control system, of the vector field 
$$V_{st}'=\frac{\partial}{\partial x_3}$$ 
on $D_{\eta_1}\times D_{\eta_2} \subset \C^2$, where $(x_1 + x_2 \sqrt{-1},x_3 +x_4\sqrt{-1})$ denotes the standard coordinates in $\C^2$. The lift, which we denote by $V_0'$, is a stratified, controlled, gradient-like (because of the compatibility with $\phi$) vector field.

Note that the flow of $V_0'$ preserves the hyperplanes $H+c$ (as it preserves the linear form $l$), and does not vanish
in $N_0$. To make it continuous  at $p$, we  actually consider the vector field 
$$V_0:=\rho V_0',$$ where $\rho$ is a distance function from $p$ in a fixed control system of $X$ and $V_0$ is still stratified, controlled (except at $p$) and gradient-like.

The open sets $\{ (X_{\epsilon,\eta_1,\eta_2} \setminus (f^c)^{-1}(0)), N_0\}$ define an open cover of $B_\epsilon(p) \setminus \{p\}$ and,
by using a partition of unity subordinate to this cover, we glue  the vector field $V'$ with $V_0$, and
denote the resulting vector field by $V$.
Hence we get the  continuous vector field $V$ on $X_{\epsilon,\eta_1,\eta_2}$ by defining $V(p)=0$ 

We claim that the vector $V$ is the desired gradient-like vector field for $f$ of the main theorem.

Indeed, first note that, as $V'$ and $V_0$ are both stratified gradient-like vector fields, 
so is the vector field $V$. It is also easy to check that $V$ does not vanish except at $p$. 
It remains to show that the unstable and stable manifolds of $V$ satisfies
the desired properties.

We now consider the unstable and stable set of $p$ for the vector field $V$.
As we have glued away from the stable and unstable set at $p$ of the vector field $V'$, these
remain as subsets of the stable and unstable set at $p$ of the vector field $V$.
Hence it remains to show that in the intersection of two open sets
$$(X_{\epsilon,\eta_1,\eta_2} \setminus (f^c)^{-1}(0)) \cap N_0, $$
we do not create any new gradient line converging to or emanating from $p$ by gluing.

To this purpose, we recall first that both vectors $f^c_*(V')$ and $f^c_*(V_0)$  are parallel to the
$x$-axis of $D_{\eta_2}$, hence the glued vector field $V$ also has such a property.
For a possibly new flow trajectory converging to or emanating from $p$, we only need to
check what happens over $\gamma_0$, which is the $x$-axis. The vector field $V'$ over $\gamma_0$ is, by construction,
the same as the vector fields $\WT{G}^\pm$ on $D^\pm$. 

Now, recall that the vector field $V'$ inside the set $V_A(P) \setminus P$ is a lift of $V_{st}$ for
the map $f^c$ and is tangent to $\partial V_{A'}(P)$ for some $A'$, hence
the flow of $V'$ preserves  the sets $\partial V_{A'}(P)$ for $A'>A$.

Consider the polar curve $\Gamma$ and a linear form $l:\Gamma \to D_{\eta_1}$. This is a branch cover at $p$, hence, as
$t \in \gamma_0$ approaches $0$, the image of the polar curve $l(x_i(t)) = y_i(t)$ also approaches $0$, thgerefore, we may assume that
in a sufficiently close neighborhood of $p$ and for sufficiently large $A'$, the flow along  $\partial V_{A'}(P)$ decreases the value of $|l|$.

Note however that the flow $V_0$ at a point of $\partial V_{A'}(P)$ fixes $|l|$, as it preserves the
linear form, hence, the vector field $V_0$ at $\partial V_{A'}(P)$ is pointing outward (away from $P$), toward $\partial V_{A''}(P)$ for
$A'' <A'$. 
As $V'$ is tangent to $\partial V_{A'}(P)$ and as $V_0$ is pointing outward to $\partial V_{A'}(P)$,
the final vector field $V$, which is obtained as a partition of unity of $V'$ and $V$, is also tangent to  $\partial V_{A'}(P)$ or pointing outward.

This implies that the flow of $V$ does not approach $p$ in the intersection of these open sets, since
$A'$ should go to $\infty$ for the flow to approach $p$.

The vector field $V$ is controlled except at $p$. We can make it weakly controlled while preserving the
trajectories of $V$, as in the case of $n=1$, by multiplying it with a function of $\rho$ which vanishes at $0$.
(here $\rho$ is the distance function to $p$ from the fixed control system).

As the unstable and stable set are given by $P|_{(f^c)^{-1}(\gamma_0)\cap P}$, and we have
already shown that they satisfy the dimensional estimate (\ref{dimest1}) after the
construction of $P$.
This finishes the proof of the claim and this proves the main theorem.
\end{proof}
Now, we prove corollary \ref{cor:main}.
\begin{proof}
To obtain a global vector field on $X$, we first apply the main theorem to each normal slice of
critical points of $f$. Then, by adding a tangential component (as in the smooth case) to each normal slice,
we obtain a vector field with the required properties in each neighborhood of critical points.
For any point $q \in X$ which is not a critical point, we choose a contractible neighorhood $U$ of $q$ which
does not contain any critical point. Then we consider a map $f:U \to \R$, which is a submersion,
and take a controlled lift of the standard unit vector field on $\R$.
Then, we obtain the global vector field by a partition of unity.
\end{proof}

As L\^e also shows in \cite{L2} we have the following corollary:

\begin{corollary}
\label{mc}
There exists a map $\chi:\partial X_t\cap B_\epsilon(p)\rightarrow W^u(V)\cap X_t\cap B_\epsilon(p)$ such that $X_t\cap B(p)$ is the mapping cylinder of $\chi$.
\end{corollary}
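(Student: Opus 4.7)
The plan is to reduce directly to the mapping cylinder part of L\^e's theorem recalled in Section \ref{leconstr}. Fix $t$ on the horizontal line $\gamma_0 \subset \WT D^+$ used to construct $V$, so that $X_t\cap B_\epsilon(p)$ is of the form $X_{\epsilon,\eta_1,\eta_2}\cap(f^c)^{-1}(t)$ appearing in that theorem. The corollary then amounts to identifying the unstable set $W^u(V)\cap X_t\cap B_\epsilon(p)$ with the vanishing polyhedron $\WT P_t$, and taking for $\chi$ the simplicial map $\psi_t:\partial X_t\cap B_\epsilon(p)\to \WT P_t$ provided by the contracting flow of $\WT E$.

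The first step is the identification $W^u(V)\cap X_t\cap B_\epsilon(p)=\WT P_t$. The inclusion $\WT P_t\subset W^u(V)$ is built into the construction of $\WT G^+$ in the proof of Theorem \ref{thm:normal}: on $V_{sharp}(\WT P)$ the lift of $V_{st}$ was defined so as to extend continuously over $\WT P$, preserve it, and project under $f^c$ onto $V_{st}$, whose reverse trajectory along $\gamma_0$ converges to $0$ in infinite time. The converse inclusion was essentially established in the last paragraph of that proof: outside $\WT P$, the field $V$ is tangent (or, after gluing with $V_0$, outward-pointing) to the level sets $\partial V_{A'}(\WT P)$ with $A'$ finite, and since $A'$ must tend to $\infty$ along any trajectory of $-V$ that limits to $p$, no such trajectory can start outside $\WT P$. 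Both the gluing with $V_0$ and the weak-control rescaling $V=h(\rho)V'$ preserve trajectories as sets, so neither alters the unstable set.

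With this identification in hand the second step is immediate: by L\^e's theorem recalled in Section \ref{leconstr}, the flow of $\WT E$ on $X_t\cap B_\epsilon(p)$ defines a continuous, surjective, stratified simplicial map $\psi_t:\partial X_t\cap B_\epsilon(p)\to \WT P_t$ such that $X_t\cap B_\epsilon(p)$ is the mapping cylinder of $\psi_t$. Setting $\chi:=\psi_t$ and composing with the identification $\WT P_t= W^u(V)\cap X_t\cap B_\epsilon(p)$ yields the desired map. The only genuinely non-formal step is the equality $W^u(V)\cap X_t=\WT P_t$, which is a bookkeeping consequence of the construction of $V$; once it is granted, the mapping cylinder structure is transported verbatim from L\^e's theorem.
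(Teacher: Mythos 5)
Your proposal is correct and follows essentially the same route as the paper: the paper's proof likewise rests on the identification $W^u(V)\cap X_t\cap B_\epsilon(p)=P_t$ (stated there simply as ``by construction'') and then takes $\chi$ to be the map given by the contracting flow of $E_t$ as $\tau\to+\infty$, so the mapping cylinder structure is inherited from L\^e's theorem exactly as you argue. Your extra paragraph justifying the identification of the unstable set with the vanishing polyhedron only spells out what the paper leaves implicit.
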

\begin{proof}
Observe that by construction $W^u(V)\cap X_t\cap B_\epsilon(p)=P_t$, so we can take as $\chi$ the flow of $E_t$ for $t\rightarrow+\infty$. 
\end{proof}
This corollary allows us to relate Morse data to the unstable set of the vector field $V$ as follows:
\begin{corollary}
The homotopy type of normal Morse data is given by 
$$(W^u(V),W^u(V)\cap B(p))$$ 
(or by $(W^s(V),W^s(V)\cap B_\epsilon(p))$).
\end{corollary}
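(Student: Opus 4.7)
The plan is to combine the mapping cylinder structure of Corollary \ref{mc} with a deformation retraction along the flow of the gradient-like vector field $V$ constructed in Theorem \ref{thm:normal}. Since $\{p\}$ is a stratum (or we have already passed to a normal slice), the normal Morse data agrees with the local Morse data at $p$, which, up to homotopy, is represented by the pair
$$\left( X \cap B_\epsilon(p) \cap f^{-1}[-\epsilon, \epsilon],\; X \cap B_\epsilon(p) \cap f^{-1}(-\epsilon) \right)$$
for $\epsilon$ sufficiently small. I would first identify this pair with the more convenient model coming from the polydisc $X_{\epsilon,\eta_1,\eta_2}$ used in the main construction, so that the horizontal foliation of $D_{\eta_2}$ by the lines $\gamma_j$ governs the flow.

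Next, I would use the backward flow of $V$, which exists and is continuous by the weak controllability established in the main theorem. Because $V$ is gradient-like, $f$ strictly decreases along $-V$ off the critical set, and $p$ is the unique critical point of $f$ in the region; consequently every trajectory of $-V$ starting in the upper piece either crosses the level $f = -\epsilon$ in finite time, or is asymptotic to $p$, in which case it lies in $W^u(V)$. This produces a stratified deformation retraction of the pair onto the pair $(W^u(V) \cap B_\epsilon(p), W^u(V) \cap f^{-1}(-\epsilon) \cap B_\epsilon(p))$. Slice-by-slice along $\gamma_0$, Corollary \ref{mc} identifies $X_t \cap B_\epsilon(p)$ as the mapping cylinder of $\chi$ onto $P_t = W^u(V) \cap X_t \cap B_\epsilon(p)$, so the retraction refines to the claimed pair $(W^u(V), W^u(V) \cap \partial B_\epsilon(p))$ (reading $B(p)$ as the sphere bounding the small ball). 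The statement for the stable set is obtained identically by working with $-V$ over the semi-disc $\widetilde{D}^-$.

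The main obstacle will be to make the slice-wise mapping cylinder retractions of Corollary \ref{mc} fit together coherently across the family of fibers, since $V$ is only continuous and merely weakly controlled rather than smooth. To handle this, I would exploit the stratified trivialization $\Psi$ introduced in the proof of Theorem \ref{thm:normal}: under $\Psi$, the vanishing polyhedra $P_t$ and the contracting vector fields $\widetilde{E}_t$ are parallel transports of a single model, so the individual retractions $\chi$ are all conjugates of one another and assemble into a single continuous, stratum-preserving deformation retraction of the Morse data pair onto the required pair built from $W^u(V)$.
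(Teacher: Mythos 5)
Your plan — retract the local Morse data pair along the backward flow of $V$ — is not the paper's argument, and its central step is not justified for this particular $V$. The claimed dichotomy (every backward trajectory either crosses $f=-\epsilon$ in finite time or converges to $p$, hence lies in $W^u(V)$) together with the asserted deformation retraction hides exactly the hard points. First, the Morse data region has boundary faces other than the lower level set, namely $X\cap\partial B_\epsilon(p)$ and $\phi^{-1}(\partial D_{\eta_1}\times D_{\eta_2})$, and nothing in the construction of $V$ (the local lifts $\WT{G}_y$ away from $V_{A_0}(P)$ are arbitrary lifts of $V_{st}$, and near $(f^c)^{-1}(0)$ the field is the glued $V'$/$V_0$ combination) makes $V$ tangent or inward along those faces; so backward trajectories may escape sideways and the flow need not map the pair into itself. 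Second, the continuity of the retraction is problematic precisely where it matters: over $\gamma_0$ the projected speed is $|z|$, so the zero fiber is reached only asymptotically and transit times to $f^{-1}(-\epsilon)$ blow up near $W^u(V)$ and near $(f^c)^{-1}(0)$; since $V$ is only continuous and weakly controlled there is no Morse-lemma normal form to control this, and your proposed repair via $\Psi$ does not apply, because $\Psi$ is only defined on $V_{sharp}(P)$ over a semi-disc and does not see the $V_0$-contribution to $V$ near the zero fiber. Finally, even granting the retraction, the backward flow compresses the region onto $f^{-1}(-\epsilon)\cup W^u(V)$, not onto $W^u(V)$, so an additional excision/cofibration step would still be needed to reach the stated pair.

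The missing idea is that no such flow analysis is necessary: by Goresky--MacPherson, for a complex analytic variety the normal Morse data has the homotopy type of $(Cone(\mathcal{L}),\mathcal{L})$, where $\mathcal{L}$ is the complex link, which here is $X_t\cap B_\epsilon(p)$ for generic $t$; Corollary \ref{mc} says this deformation retracts onto $P_t=W^u(V)\cap X_t\cap B_\epsilon(p)$; and by construction $W^u(V)$ is the cone over $P_t$ (the union of the $P_t$ along $\gamma_0$ together with $p$). These three facts give the corollary immediately. If you insist on a direct flow-theoretic proof, you would have to establish tangency/inwardness of $V$ along all boundary faces and the continuity of the collapse near $W^u(V)$, which amounts to reproving L\^e's mapping cylinder theorem rather than using it through Corollary \ref{mc}.
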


\begin{proof}
For a complex analytic variety the homotopy type of normal Morse data is given by $(Cone({\cal L}),{\cal L})$, where ${\cal L}$ denotes the complex link of $p$ and $Cone({\cal L})$ the cone over ${\cal L}$; but by definition ${\cal L}$ is $X_t\cap B_\epsilon(p)$, which, by corollary \ref{mc}, deformation retracts onto $W^u(V)\cap B_\epsilon(p)$; on the other hand, by construction, $W^u(p)$ is the cone over $P_t=W^u(V)\cap B(p)$.
\end{proof}
 
\begin{corollary}
The normal Morse data are homeomorphic to $(J,K)$ where
$$J=Cone([(Cyl(\chi)\times[0,2\pi])/(q,0)\cong(\mu(q),2\pi)]\cup_{\partial(X_t\cap B(p))\times S^1}[\partial(X_t\cap B(p))\times D^1])$$
$$K=Cyl(\chi)\times[0,2\pi]$$
where $\mu$ is the monodromy associated to $p$, $\chi:\partial X_t\cap B(p)\rightarrow W^u(V)\cap X_t\cap B(p)$ is the map appearing in corollary \ref{mc} and $K\subset J$ is embedded in the base of the cone.
\end{corollary}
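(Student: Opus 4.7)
The plan is to unfold the normal Morse data at $p$ using the fibration structure of $f^c$ and to match each piece with the corresponding part of $J$ and $K$. Throughout I abbreviate $\mathcal{L} = X_t \cap B_\epsilon(p) = Cyl(\chi)$ for the complex link and $\partial\mathcal{L} = \partial(X_t \cap B_\epsilon(p))$. After the reduction to a normal slice so that $\{p\}$ is a $0$-stratum, the normal Morse data is the pair $(A, B)$ with $A = B_\epsilon(p) \cap f^{-1}([-\delta, \delta])$ and $B = B_\epsilon(p) \cap f^{-1}(-\delta)$. Setting $R_\delta = \{z \in D_{\eta_2} : -\delta \leq Re(z) \leq \delta\}$, a topological $2$-disk in $\C$ containing $0$ in its interior, and $L_{-\delta} = \{z \in D_{\eta_2} : Re(z) = -\delta\}$, a vertical segment lying in $\partial R_\delta$, this becomes $A = (f^c)^{-1}(R_\delta) \cap B_\epsilon(p)$ and $B = (f^c)^{-1}(L_{-\delta}) \cap B_\epsilon(p)$.

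Next, I would give $A$ a cone structure with apex $p$. By the conical neighborhood theorem for Whitney stratified spaces, $X \cap B_\epsilon(p)$ is homeomorphic to the cone on the link $X \cap \partial B_\epsilon(p)$. Combining this with the fact that $R_\delta$ deformation retracts onto $0$ inside $D_{\eta_2}$, a controlled stratified radial retraction (for instance the flow of $\rho V_r$, where $V_r$ is a radial vector field on $R_\delta$ pointing outward from $0$ and $\rho$ is a distance function to $p$) yields a stratified homeomorphism $A \cong p \ast \partial_{\mathrm{out}} A$, where the ``outer'' boundary decomposes as $\partial_{\mathrm{out}} A = M \cup N$ with $M = (f^c)^{-1}(\partial R_\delta) \cap B_\epsilon(p)$ and $N = X \cap \partial B_\epsilon(p) \cap f^{-1}([-\delta, \delta])$.

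The pieces $M$ and $N$ are then identified with the constituents of the base of $J$. For $M$: $\partial R_\delta$ is a simple loop around $0$ in $D_{\eta_2}$ carrying no critical value of $f^c$, so $f^c|_M : M \to \partial R_\delta \cong S^1$ is a locally trivial stratified fibration by Thom--Mather, with fiber $\mathcal{L} = Cyl(\chi)$ and monodromy $\mu$; hence $M \cong (Cyl(\chi) \times [0, 2\pi])/(q, 0) \sim (\mu(q), 2\pi)$. For $N$: by Milnor transversality (for $\epsilon$ small), $f^c$ restricted to $X \cap \partial B_\epsilon(p)$ is stratumwise submersive, so $f^c|_N : N \to R_\delta$ is a locally trivial fibration with fiber $\partial\mathcal{L}$, and being over the contractible base $R_\delta$ it is trivial: $N \cong \partial\mathcal{L} \times R_\delta \cong \partial(X_t \cap B_\epsilon(p)) \times D^1$. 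The common boundary $\partial M = \partial N = \partial\mathcal{L} \times S^1$ is exactly the gluing locus in the statement of the corollary, so $\partial_{\mathrm{out}} A$ coincides with the base of $J$.

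Finally, $B$ is the preimage of the arc $L_{-\delta} \subset \partial R_\delta$, so $B \subset M$; restricting the fibration $f^c|_M$ to an arc is trivial (no monodromy is collected), which gives $B \cong Cyl(\chi) \times [0, 2\pi] = K$ sitting inside $M \subset \partial_{\mathrm{out}} A$, that is, in the base of the cone $J$. Altogether this produces the desired homeomorphism of pairs $(A, B) \cong (J, K)$. The main obstacle is the cone-structure step: turning the deformation retraction of $A$ onto $p$ into a genuine stratified homeomorphism with a cone, compatibly with the product structure on $N$ and the mapping-torus structure on $M$. I would handle this by patching a radial vector field near $p$ (given by the control system) with a controlled lift of the outward radial vector field on $R_\delta$ obtained from the Thom--Mather trivializations used to identify $M$ and $N$, and by checking that the resulting cone structure is compatible with both identifications.
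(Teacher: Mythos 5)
Your proposal takes a genuinely different route from the paper. The paper's proof of this corollary is a one-line citation: it invokes the known homeomorphism type of the normal Morse data from \cite{GMP} \emph{mutatis mutandis}, and the only new input is the substitution $\mathcal{L}=X_t\cap B_\epsilon(p)=Cyl(\chi)$ coming from Corollary \ref{mc}. You instead re-derive that Goresky--MacPherson description directly in the present setting, using the $f^c$-fibration: the mapping torus of $\mu$ over $\partial R_\delta$, the trivial $\partial\mathcal{L}$-bundle over $R_\delta$, and a cone structure on $B_\epsilon(p)\cap f^{-1}([-\delta,\delta])$. This buys a self-contained geometric picture, but at the price of redoing the technical work that the citation absorbs. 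Two such points need to be named. First, the identification $B_\epsilon(p)\cap f^{-1}([-\delta,\delta])=(f^c)^{-1}(R_\delta)\cap B_\epsilon(p)$ with $R_\delta\subset D_{\eta_2}$ is false as an equality of sets, since $f^c(B_\epsilon(p))$ need not lie in $D_{\eta_2}$; one needs a ``moving the wall'' argument (as in \cite{GMP}) to truncate the strip $\{|Re(z)|\le\delta\}$ to $R_\delta$ without changing the homeomorphism type of the pair, and similarly for $B$. Second, the step you yourself flag---upgrading the retraction of $A$ onto $p$ to a homeomorphism $A\cong \mathrm{cone}(\partial_{\mathrm{out}}A)$ compatible with the Thom--Mather trivializations of $M$ and $N$, and with the monodromy chosen to be the identity near $\partial\mathcal{L}$ so that $\partial M=\partial\mathcal{L}\times S^1$---is exactly the nontrivial content of the Goresky--MacPherson proof; your vector-field patching plan is plausible but is where the real care is required.

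One concrete slip: your own derivation gives $N\cong\partial\mathcal{L}\times R_\delta$ with $R_\delta$ a topological $2$-disk, i.e. $\partial\mathcal{L}\times D^2$, and indeed only $D^2$ has boundary containing the gluing locus $\partial\mathcal{L}\times S^1$ appearing in the statement; $\partial\mathcal{L}\times D^1$ has the wrong dimension and the wrong boundary. The ``$D^1$'' in the corollary is evidently a misprint for $D^2$, and your final identification $\partial\mathcal{L}\times R_\delta\cong\partial(X_t\cap B_\epsilon(p))\times D^1$ forces a correct computation to match it; state $D^2$ instead. With that correction and the two points above made explicit (or replaced by the citation, as the paper does), the argument is sound.
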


\begin{proof}
This follows \textit{mutatis mutandis} from the homeomorphism type of normal Morse data in \cite{GMP}.
\end{proof}

\end{document}